\documentclass[10pt]{amsart}
\usepackage{amscd}
\usepackage{amsmath}
\usepackage{amsfonts}
\usepackage{amssymb}
\usepackage{amsthm}
\usepackage{enumerate}
\usepackage[T1]{fontenc}
\usepackage{hyperref}
\usepackage{mathrsfs}
\usepackage{stackrel}
\usepackage{verbatim}
\usepackage[all]{xy}
\usepackage{yhmath}

\newtheorem{thm}{Theorem}[section]
\newtheorem{lem}[thm]{Lemma}
\newtheorem{defn}[thm]{Definition}
\newtheorem{prop}[thm]{Proposition}
\newtheorem{cor}[thm]{Corollary}

\newcommand{\B}{\mathcal{B}}
\newcommand{\C}{\mathcal{C}}
\newcommand{\D}{\mathcal{D}}

\newcommand{\M}{\mathcal{M}}

\renewcommand{\O}{\mathcal{O}}

\newcommand{\h}[1]{\widehat{#1}}

\newcommand{\w}[1]{\wideparen{#1}}

\newcommand{\cind}{\operatorname{c-Ind}}

\begin{document}
	\title{A Mackey criterion for locally analytic representations}
	\author{Andreas Bode}
	\maketitle

	\begin{abstract}
		We prove an analogue of Mackey's irreducibility criterion for compactly induced locally analytic representations of $p$-adic groups, where we induce from an open subgroup which is compact mod centre. We also discuss several examples.
	\end{abstract}
	
	\tableofcontents
	
	\section{Introduction}
	
	This paper studies the locally analytic induction with compact support of an admissible, locally analytic representation of a $p$-adic group in the framework of \cite{ST}, and provides an irreducibility criterion analogous to the classical Mackey irreducibility criterion.
	
	Mackey theory in its more classical incarnations (e.g. for representations of finite groups, or for smooth complex representations of $p$-adic groups) studies induced representations. In very broad terms, if $H\leq G$ is a suitable subgroup of a group $G$ and $V$ is an $H$-representation, then Mackey theory boils down to the following key components, with the precise formulation depending on the context:
	
	\begin{enumerate}[(i)]
		\item Mackey decomposition: Under suitable conditions, $\cind_H^GV$ decomposes as an $H$-representation into the following direct sum:
		\begin{equation*}
			\mathrm{Res}_H^G\cind_H^GV\cong \underset{g\in H\setminus G/H}{\oplus} \cind_{H\cap gHg^{-1}}^H \mathrm{Res}_{H\cap gHg^{-1}}^{gHg^{-1}}gV.
		\end{equation*}
		\item Semisimplicity: If $V$ is a semisimple $H$-representation, then so is $\mathrm{Res}_H^G\cind_H^GV$.
		\item Mackey irreducibility criterion: Suppose that $V$ is irreducible and does not appear as a constituent of any other (semisimple) Mackey summand in (i). Then $\cind_H^GV$ is an irreducible $G$-representation. Note that the condition on Mackey summands can be expressed as requiring that
		\begin{equation*}
			\mathrm{Hom}_{H\cap gHg^{-1}}(V, gV)=0
		\end{equation*}
		for all $g\in G\setminus H$, i.e. `there are no non-trivial intertwiners for $V$'.
	\end{enumerate}
	
	One straightforward incarnation of this philosophy is in the situation of (complex, finite-dimensional) representations of finite groups \cite[chapter 7]{Serre}, where semisimplicity is automatic and the irreducibility criterion is a direct consequence of Frobenius reciprocity and basic character theory.
	
	Similarly, in the case of smooth complex representations of $p$-adic groups, an analogous theory holds e.g. for $H$ an open subgroup of the form $H=H_0Z_G$ with $H_0$ compact open, see \cite[Theorem 11.4]{BH}. 
	
	This paper spells out a version of the three points above for locally analytic representations of $p$-adic groups.
	
	Our setting will be the following: Let $L/\mathbb{Q}_p$ be a finite extension and let $G$ be a locally $L$-analytic group -- for example, the $L$-valued points of some algebraic group over $L$. Let $H\leq G$ be an open subgroup which is compact mod centre in the same sense as above. 
	
	Let $K$ be a complete nonarchimedean field extension of $L$. The representations which we consider are the locally $L$-analytic $G$-representations in topological $K$-vector spaces of Schneider--Teitelbaum \cite{ST}, i.e. we consider (barrelled, Hausdorff, locally convex) topological $K$-vector spaces $V$ with a $G$-action such that for each $v\in V$, the orbit map $G\to V$, $g\mapsto g\cdot v$ is a locally $L$-analytic map. 
	
	Representations of this type are ubiquitous in the $p$-adic local Langlands programme, see e.g. \cite{Kir}, \cite{Colmezla}, \cite{EGH} and the references therein. The locally analytic compact induction is then a natural functor to obtain representations for non-compact groups like $G=\mathrm{GL}_n(\mathbb{Q}_p)$ out of representations for a compact open subgroup, like $H_0=\mathrm{GL}_n(\mathbb{Z}_p)$. In general, such compact inductions are very large if $G$ is non-compact: they are usually neither admissible nor topologically irreducible. In fact, comparable to \cite{BL} and \cite{Orlik}, one might try reducing to a fixed Hecke character, and then study irreducible constituents (but we warn that there exists ample evidence from characteristic $p$ that even after fixing a Hecke character, the representation might still be non-admissible, see e.g. \cite{Feng}). From this perspective, Mackey theory can be viewed as the starting point to investigate the shape of the corresponding Hecke algebra $\mathcal{H}=\mathrm{End}_G (\cind_H^GV)^{\mathrm{op}}$, and the irreducibility criterion discusses the case where $\mathcal{H}\cong\mathrm{End}_H(V)^{\mathrm{op}}$.
	
	We introduce a version of semisimplicity called uniform semisimplicity, which is particularly well-suited for the Fr\'echet--Stein setting, and prove the following (new terminology is explained after the Theorem):
	
	\begin{thm}\label{IntroThm}
		Let $G$ be a locally $L$-analytic group, $H=H_0Z_G\leq G$ an open subgroup which is compact mod centre, and let $V$ be an admissible locally $L$-analytic $H$-representation over $K$.
		\begin{enumerate}[(i)]
			\item Mackey decomposition: There is a canonical isomorphism of locally $L$-analytic $H$-representations
			\begin{equation*}
				\mathrm{Res}_H^G\cind_H^G V\cong \underset{g\in H\setminus G/H}{\oplus} \cind_{H\cap gHg^{-1}}^H\mathrm{Res}_{H\cap gHg^{-1}}^{gHg^{-1}}gV.
			\end{equation*}
			In particular, $\cind_H^G V$ is an ind-admissible $G$-representation.
			\item Semisimplicity: If $M=V'$ is a uniformly semisimple $D(H_0)$-module, then each Mackey summand
			\begin{equation*}
				\cind_{H\cap gHg^{-1}}^H \mathrm{Res}_{H\cap gHg^{-1}}^{gHg^{-1}}gV
			\end{equation*}
			is a topologically semisimple $H_0$-representation.
			\item Mackey irreducibility criterion: If $M=V'$ is a uniformly simple $D(H_0)$-module such that
			\begin{equation*}
				\mathrm{Hom}_{D(H\cap gHg^{-1})}(gM, M)=\begin{cases}
					K & \text{if $g\in H$}\\
					0 & \text{if $g\in G\setminus H$}
				\end{cases}
			\end{equation*} 
			then $\cind_H^GV$ is a topologically irreducible, ind-admissible $G$-representation.
		\end{enumerate}
	\end{thm}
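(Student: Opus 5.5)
We plan to pass systematically to the dual side and argue with coadmissible modules over the distribution algebras, where the Fréchet--Stein formalism of \cite{ST} makes the finiteness and exactness properties available. The first step is to describe $\cind_H^GV$ explicitly as a locally convex inductive limit. Choose representatives $g$ of $H\backslash G/H$. Since $H$ is open, each double coset $HgH$ is open and closed, and $G=\bigsqcup_g HgH$. Functions with compact support modulo $H$ therefore split as a locally convex direct sum over double cosets, and this sum is $H$-stable. For a fixed $g$, the double coset $HgH$ is a disjoint union of finitely many cosets modulo the centre, because $H/Z_G$ is compact and $H\cap gHg^{-1}\supseteq Z_G$ is open. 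Restricting functions to $HgH$ and evaluating via $h\mapsto f(hg)$ then identifies the summand with $\cind_{H\cap gHg^{-1}}^H$ of the conjugated representation $gV$. This is a finite (modulo centre) induction, so on the dual side it is $D(H)\otimes_{D(H\cap gHg^{-1})} gM$. This gives (i) as topological isomorphisms.

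For ind-admissibility, note that each summand restricted to $H_0$ is admissible. Indeed, $D(H_0)\otimes_{D(H_0\cap gH g^{-1})}(-)$ preserves coadmissibility, since $D(H_0)$ is finite free over $D(H_0\cap gHg^{-1})$ when the index is finite. Taking finite partial sums in an exhausting sequence of double cosets then exhibits $\cind_H^GV$ as a countable increasing union of admissible $H$-subrepresentations.

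For (ii), fix $g$ and set $H'_0=H_0\cap gH_0g^{-1}$, an open subgroup of finite index in both $H_0$ and $gH_0g^{-1}$. Uniform semisimplicity of $M$ over $D(H_0)$ should, by its definition (semisimplicity compatible with the Fréchet--Stein presentation $D(H_0)=\varprojlim D_r(H_0)$, level by level), pass to restriction to the finite-index subgroup $H'_0$. The key tool is the analogue of Maschke's theorem for the finite groups $H_0/N$ acting on $D_r$-modules, using $p\ne 0$ in $K$, i.e.\ $\mathrm{char}\,K=0$, so that we can average. The same averaging argument should show that inducing from $H'_0$ up to $H_0$ preserves uniform semisimplicity. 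Conjugation by $g$ transports the structure to $gM$. Dualising, a uniformly semisimple coadmissible module gives a topologically semisimple representation. The same argument must also handle the centre, using that $Z_G$ acts through the decomposition. I expect the main obstacle to be controlling semisimplicity uniformly at every level $r$, in particular that restriction to $H'_0$ preserves it. For this I would first try to show that the $D_r(H'_0)$-submodules are complemented by $H_0/H'_0$-averaged projections.

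For (iii), let $W\subseteq\cind_H^GV$ be a nonzero closed $G$-invariant subspace. By (ii), the $H$-representation $\cind_H^GV$ is semisimple summand by summand. Its $V$-isotypic part is exactly the $g\in H$ summand $V$, because $\mathrm{Hom}_{D(H\cap gHg^{-1})}(gM,M)=0$ for $g\notin H$, and Frobenius reciprocity turns this into vanishing of $\mathrm{Hom}_H$ between $V$ and the other summands. The projection of $W$ to some summand is nonzero, so $W$ contains some simple $H$-constituent $U$. Translating by $G$, $W$ contains $g^{-1}U$, which can be arranged to meet the identity summand nontrivially. Since $V$ is simple, $W\supseteq V$, and then $W\supseteq G\cdot V$. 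The latter is dense in $\cind_H^GV$, so $W$ is everything. The condition $\mathrm{End}=K$ is used so that the isotypic component is $V$ itself and not a multiple of it. Ind-admissibility comes from (i).
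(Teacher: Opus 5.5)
Your part (i) is fine; the paper just cites \cite[Lemmas A.19, A.20]{Orlik} for it. Part (iii) has the same overall shape as the paper's argument. But part (ii) has a genuine gap, and (iii) depends on it.

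\textbf{Averaging goes the wrong way.} You want to show that uniform semisimplicity survives restriction to the finite-index subgroup $H_0'$, using averaging over $H_0/H_0'$. An $H_0/H_0'$-averaged projection is $D_r(H_0)$-linear, so its kernel and image are $H_0$-stable. It therefore cannot complement a $D_r(H_0')$-submodule that is not $H_0$-stable. Averaging only proves the \emph{upward} implication, from semisimple over the subgroup to semisimple over the group; this is the first half of Proposition \ref{ssres} and Proposition \ref{ssFSres}.(i). The downward direction on a Banach level needs a Clifford-type argument instead:
\begin{itemize}
\item pass to a normal subgroup $H'$;
\item take a simple $D_r(H')$-quotient $N$ of the simple $D_r(G_0)$-module $M_r$, which exists because $M_r$ is finitely generated over $D_r(H')$;
\item embed $M_r$ into $\mathrm{Hom}_{D_r(H')}(D_r(G_0),N)\cong\bigoplus_i g_iN$, which is semisimple because $H'$ is normal.
\end{itemize}

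\textbf{Banach-level semisimplicity is not enough.} Even if every $M_r$ is semisimple over $D_r(H')$, this gives no decomposition of $M$ itself in $\mathcal{C}_{D(H')}$, because the level-wise decompositions need not be compatible as $r$ varies. This Fr\'echet-level issue is exactly what the paper is built around. The missing idea runs as follows:
\begin{itemize}
\item There is a uniform bound $\mathrm{length}_{D_r(H')}(M_r)\le[G_0:H']$, since the translates of one simple summand already sum to $M_r$.
\item By \cite[Lemma 4.8.(ii)]{Reichardt}, this bound gives finite length of $M$ in $\mathcal{C}_{D(H')}$, hence a topologically simple quotient $N$ of $M$.
\item The adjoint map $M\to\bigoplus_i g_iN$ is injective with closed image, using topological simplicity over $D(G_0)$ and strictness of morphisms of coadmissible modules.
\item Lemma \ref{closedss} then makes $M$ topologically semisimple over $D(H')$.
\end{itemize}
This yields only \emph{topological} semisimplicity, since nothing makes $N$ uniformly simple. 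So your claims that restriction and induction preserve uniform semisimplicity are not established, and the paper deliberately avoids them. With the step above in hand, the paper finishes (ii) as follows:
\begin{itemize}
\item take $H'$ to be the normal core of $H_0\cap gH_0g^{-1}$ in $H_0$;
\item note that $\bigoplus_h hgM$ is topologically semisimple over $D(H')$;
\item average at the Fr\'echet level to reach $D(H_0)$;
\item the summand for $H$ is a quotient of this module.
\end{itemize}

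\textbf{A step in (iii) to tighten.} A nonzero projection of $W$ onto some Mackey summand $X_g$ does not place a constituent inside $W$. Translating an $H$-constituent by $g^{-1}$ also destroys its $H$-stability. Instead:
\begin{itemize}
\item use $G$-stability to get a nonzero projection of $W$ onto the identity summand $V$;
\item apply semisimplicity to the closed $H$-subrepresentation $W\cap\bigoplus_{i\le m}X_{g_i}$ for $m$ large, and split off a copy of $V$ inside $W$;
\item multiplicity one then forces this copy to be the identity summand.
\end{itemize}
This is the paper's argument with the closed images $N_m$ on the dual side.
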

	
	We need to briefly comment on some of the terminology in the theorem above.
	
	The notion of ind-admissible representation was introduced in \cite{Orlik} as a natural generalization of admissibility (see also \cite[Proposition 7.1.7]{KisinStrauch} for a related notion). As mentioned before, compactly inducing to a non-compact group $G$ rarely produces admissible representations. At the end of the paper, we discuss several natural examples, and we observe that in some cases, the topologically irreducible representations obtained by Theorem \ref{IntroThm} are actually admissible.
	
	The notion of `uniform semisimplicity' is a variant of topological semisimplicity, which we introduce in section 3: working dually with modules over the Fr\'echet--Stein distribution algebra $D(H_0)\cong \varprojlim D_n(H_0)$ of some compact open subgroup $H_0$ with $H=H_0Z_G$, uniform semisimplicity translates to semisimplicity on each Noetherian Banach level $D_n(H_0)$ (at least for sufficiently large $n$) in a suitably compatible manner. Thus, this condition is well-suited to develop a proof for (ii) which bypasses the topological subtleties on the Fr\'echet level. 
	
	We phrase some of our arguments involving $p$-adic functional analysis in the language of complete bornological vector spaces, remaining consistent with the appendix to \cite{Orlik}, but almost all our arguments are entirely algebraic (Corollary \ref{coadcheck} being the exception which confirms the rule). All spaces involved are extremely mild from an analytic perspective -- the modules are countable products of coadmissible modules, and in particular, (nuclear) Fr\'echet spaces. We remark in particular that the same proof can be carried out verbatim e.g. in the setting of solid modules over the distribution algebra, see \cite{Bosco}, \cite{JERC}. 
	
	The paper is organized as follows. In section 2, we recall the basic framework of ind-admissible representations, and dually, the theory of pro-coadmissible modules, building on \cite{Orlik}. Theorem \ref{IntroThm}.(i) follows immediately and was already proved in the appendix to \cite{Orlik} (see \cite[Lemma A.19]{Orlik}). In section 3, we introduce various notions of semisimplicity for modules over a distribution algebra, and study how they behave under restriction, conjugation, and induction. This provides the technical heart of the paper, and allows us to prove Theorem \ref{IntroThm}.(ii) (see Proposition \ref{topssfactors}). From there, it is easy to deduce Theorem \ref{IntroThm}.(iii) via a standard argument (see Theorem \ref{FSMackeycentral}), which we spell out in section 4. We also give a variant of the irreducibility criterion which asks for a condition on each of the Noetherian Banach levels, which in some cases may be easier to check (Theorem \ref{BanachMackey}).
	
	In section 5, we provide several examples of representations where the Mackey criterion applies: we induce characters to the $p$-adic Heisenberg group from its integral form, and we induce finite-dimensional irreducible representations to the Borel subgroup of $\mathrm{SL}_2(\mathbb{Q}_p)$ from its uniform `congruence subgroup'. We point out that in the first case, we obtain that the resulting topologically irreducible representations are even admissible, while this never occurs in the second case.

	\subsection*{Acknowledgements}
	The author would like to thank Sascha Orlik for several helpful discussions and comments on a draft version of this paper.

	\section{Ind-admissible representations and the Mackey decomposition}
	
	We presuppose knowledge of the basic theory of admissible locally analytic representations as given in \cite{ST}. Recall that a locally analytic representation of a locally $L$-analytic group $G$ is generally described as the dual of a module over the locally analytic distribution algebra $D(G):=D(G, K)$. 
	
	If $G$ is compact, then $D(G)$ is a nuclear Fr\'echet--Stein $K$-algebra (i.e. it is a Fr\'echet--Stein algebra whose underlying topological vector space is nuclear Fr\'echet). This allows us to characterize \emph{admissible} $G$-representations as those which are dual to a coadmissible $D(G)$-module. For general groups $G$, admissibility is defined by restricting to some compact open subgroup.
	
	We remark that in \cite{ST}, the additional assumption of spherical completeness is imposed on $K$ to ensure that the duality functor is well-behaved and e.g. the Hahn-Banach theorem can be applied. In our context, this restriction does not appear to be necessary, as we will only be concerned with spaces of countable type. We refer to \cite[section 4.2]{Schikhof} for a discussion of the Hahn-Banach theorem within this framework, and to \cite{Mihara} for a similar treatment concerning Banach representations. 
	
	Let $G$ be a locally $L$-analytic group and let $H_0\leq G$ be a compact open subgroup. For $H=H_0Z_G$, we will soon encounter a functor
	\begin{align*}
		\cind_H^G: & \mathrm{Rep}^{\mathrm{la}}(H)\to \mathrm{Rep}^{\mathrm{la}}(G)\\
			&	V\mapsto \cind_H^GV, 
	\end{align*}
	but it is extremely rare for $\cind_H^G V$ to be admissible, even if $V$ is. This forces us to work in a slightly larger category.
	
	We recall the following definition from \cite[Definition A.4]{Orlik}.
	
	\begin{defn}
		Let $A$ be a Fr\'echet--Stein algebra. A Fr\'echet $A$-module $M$ is called \textbf{pro-coadmissible} if $M$ can be written as a countable inverse limit of coadmissible $A$-modules, with surjective transition maps.
	\end{defn}
	
	We collect the following results from the appendix to \cite{Orlik}, which generalize the usual properties of coadmissible modules to the pro-coadmissible setting.
	
	\begin{prop}\label{procoadproperties}
		Let $A$ be a nuclear Fr\'echet--Stein $K$-algebra.
		\begin{enumerate}[(i)]
			\item The category of pro-coadmissible $A$-modules (with continuous $A$-module morphisms) is abelian, containing all coadmissible $A$-modules, and is closed under countable direct products.
			\item Any pro-coadmissible $A$-module $M$ is a nuclear Fr\'echet $K$-vector space. In particular, $M$ is reflexive as a locally convex topological vector space.
			\item Any continuous $A$-module morphism between pro-coadmissible $A$-modules is strict.
		\end{enumerate}
	\end{prop}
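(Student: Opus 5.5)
We would prove the three parts in order (i), (ii), (iii), each time reducing to the corresponding statement for coadmissible modules over a Fr\'echet--Stein algebra $A\cong\varprojlim A_n$.

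For (i), let $M=\varprojlim_k M_k$ be pro-coadmissible, with each $M_k$ coadmissible and surjective transition maps. We first show that $M$ is Fr\'echet. Each $M_k$ is Fr\'echet via its canonical topology as $\varprojlim_n A_n\otimes_A M_k$. A countable inverse limit of Fr\'echet spaces is Fr\'echet.

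Next we treat kernels of morphisms. Given a continuous $A$-linear map $f\colon M\to N$, we need $\ker f$, $\mathrm{coker} f$ and the image to be pro-coadmissible. The strategy is to re-index, so that $f$ becomes a limit of compatible maps $f_k\colon M_{j(k)}\to N_k$ between coadmissible modules. To arrange this:
\begin{enumerate}[(a)]
\item Observe that the composite $M\to N\to N_k$ has coadmissible target.
\item Note that its image is the image of a finitely generated piece at each Banach level.
\item Use continuity to see that the map factors through some $M_{j}$, since $\ker(M\to M_j)$ forms a neighbourhood basis of $0$. This step needs care: continuity alone only gives factorization through a quotient of $M_j$ at finite level. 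Instead, we use that $\mathrm{Hom}_A(M,N_k)$ is the colimit of the $\mathrm{Hom}_A(M_j,N_k)$; this is an equality we would check using the fact that coadmissible targets are topologically finitely generated.
\end{enumerate}
Kernels, images and cokernels of maps between coadmissible modules are coadmissible. Taking limits, the kernel is pro-coadmissible, and for the cokernel we use the Mittag-Leffler property (surjective transition maps) to pass limits through. Countable products $\prod_i M^{(i)}$ equal $\varprojlim_k \prod_{i\le k} M^{(i)}_k$, with finite products of coadmissibles coadmissible, and the diagonal re-indexing keeps the transition maps surjective.

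For (ii), nuclearity is stable under closed subspaces, countable products and hence countable projective limits. Each coadmissible module is nuclear, being a closed subspace (indeed a quotient) of finite sums of the nuclear algebra $A$. Reflexivity then follows from nuclear Fr\'echet implying Montel, hence reflexive.

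For (iii), we apply the open mapping theorem. The image $f(M)$ is pro-coadmissible by (i), hence closed in $N$ and Fr\'echet. So $M\to f(M)$ is a continuous surjection of Fr\'echet spaces, hence open, which is exactly strictness.

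The main obstacle is the factorization of $f$ through finite stages of the inverse systems in (i). If the colimit description of $\mathrm{Hom}$ fails directly, we would instead work with the closed images $\overline{\mathrm{im}}(M\to N_k)$. These are closed submodules of coadmissible modules and hence coadmissible, and we would define the new system from them.
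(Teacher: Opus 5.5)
\textbf{There is a genuine gap in (i), at exactly the step you flag, and it carries over to (iii).}

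The identity $\mathrm{Hom}_A(M,N_k)=\varinjlim_j\mathrm{Hom}_A(M_j,N_k)$ is false for coadmissible targets. Take nonzero coadmissible modules $(M^i)_{i\geq 1}$ whose product is coadmissible (Corollary \ref{coadcheck}; e.g.\ the dual of $V_{\chi,\mu}$ in the Heisenberg example, over $A=D(H_0)$). Put $M=N=\prod_i M^i$ with $M_j=\prod_{i\leq j}M^i$ and $N_k=N$. Then $\mathrm{id}_M$ factors through no $M_j$.

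Continuity does force a factorization when the target is a Banach space such as $A_n\otimes_A N_k$. Some $p_j^{-1}(U)$ is mapped into the unit ball, hence so is the linear subspace $\ker p_j$, which therefore maps to $0$. Here $p_j\colon M\to M_j$ are the projections. But $j$ then depends on $n$, so in general $f$ cannot be reindexed as a morphism of inverse systems of coadmissible modules.

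Your fallback does not repair this. Closures do handle kernels, and in fact all closed submodules. For closed $C\subseteq M$ one has $C=\varprojlim_j\overline{p_j(C)}$ with coadmissible terms, and the transition maps are surjective because morphisms of coadmissible modules have closed image. However, replacing $\mathrm{im}(M\to N_k)$ by its closure only shows that $\overline{f(M)}$ is pro-coadmissible. It discards precisely the question of whether $f(M)$ itself is closed.

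Without that closed-image property you obtain neither of the following:
\begin{enumerate}[(a)]
\item the cokernel, which among Fr\'echet modules is $N/\overline{f(M)}$;
\item the topological isomorphism $M/\ker f\cong\overline{f(M)}$ that abelianness requires.
\end{enumerate}
Your Mittag-Leffler remark does not help here. Surjectivity of the transition maps of $M$ and $N$ says nothing about $\varprojlim^1$ of the kernels of the levelwise maps.

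Consequently your (iii) is circular. The step ``$f(M)$ is pro-coadmissible by (i), hence closed'' presupposes the closed-image property, which is the whole content of (iii). Once that property is available, the open mapping theorem finishes the argument as you say. The paper does not prove (i) and (iii) itself; it quotes them from Orlik's appendix (Proposition A.15 and Corollary A.13). A self-contained proof along your lines needs a genuine argument that continuous $A$-linear maps between pro-coadmissible modules have closed image, and the reindexing strategy does not supply one.

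\textbf{Part (ii) has a smaller flaw that is easily repaired.} Your justification for nuclearity of coadmissible modules only covers finitely generated ones. Coadmissible modules need not be finitely generated over $A$, so in general they are not quotients of any $A^r$.

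The argument the paper invokes is levelwise:
\begin{enumerate}[(a)]
\item Using nuclearity of $A$, pass to a subsequence of the presentation so that the maps $A_{n+1}\to A_n$ are compactoid.
\item Since $M_{n+1}$ is a quotient of some $A_{n+1}^{r}$ and $M_n\cong A_n\otimes_{A_{n+1}}M_{n+1}$, the maps $M_{n+1}\to M_n$ are then compactoid as well.
\item Hence $M=\varprojlim M_n$ is nuclear.
\end{enumerate}
With this fixed, your passage to countable products and closed subspaces, and your deduction of reflexivity, are fine.
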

	
	\begin{proof}
		See \cite[Proposition A.15]{Orlik} for (i). As any coadmissible module over a nuclear Fr\'echet--Stein algebra is nuclear by an argument analogous to \cite[Proposition 5.5]{SixOp}, (ii) follows immediately, using \cite[Corollary 8.5.3, Definition 8.4.2]{Schikhof} for reflexivity. See \cite[Corollary A.13]{Orlik} for (iii).
	\end{proof}
	
	Using the same argument as in \cite[Lemma A.5]{Orlik}, we can regard the category of pro-coadmissible $A$-modules as a full subcategory of $\mathrm{Mod}_{\h{\B}c_K}(A)$, the category of complete bornological $A$-modules.
	
	We refer to \cite{SixOp} for a detailed discussion of $\h{\B}c_K$ and corresponding module categories. We will not require much from this technical framework, except that $\h{\B}c_K$ is closed symmetric monoidal with respect to the completed tensor product $\h{\otimes}_K$, giving rise to relative completed tensor products $\h{\otimes}_A$. If $A\cong \varprojlim A_n$ is a nuclear Fr\'echet--Stein algebra and $M\cong \varprojlim M_n$ is a coadmissible $A$-module, we can regard $A$ as a complete bornological $K$-algebra and $M$ as a complete bornological $A$-module such that the finitely generated $A_n$-module $A_n\otimes_A M\cong M_n$, equipped with its canonical Banach structure, is naturally isomorphic to $A_n\h{\otimes}_A M$ (see \cite[Corollary 5.38]{SixOp}).   
	
	\begin{cor}\label{coadcheck}
		Let $A\cong \varprojlim A_n$ be a nuclear Fr\'echet--Stein $K$-algebra. Let $(M^i)_{i\in I}$ be a countable family of coadmissible $A$-modules. Then $M:=\prod_{i\in I} M^i$ is a pro-coadmissible $A$-module, and the natural morphism
		\begin{equation*}
			A_n\h{\otimes}_A \prod_i M^i\to \prod_i (A_n\h{\otimes}_A M^i)
		\end{equation*}
		is an isomorphism for each $n$.
		
		In particular, $M$ is coadmissible if and only if for each $n$, 
		\begin{equation*}
			A_n\otimes_AM^i=A_n\h{\otimes}_AM^i=0
		\end{equation*}
		for all but finitely many $i\in I$.
	\end{cor}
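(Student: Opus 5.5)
Three things are needed: pro-coadmissibility of $M$, the isomorphism $A_n\h{\otimes}_A M\cong\prod_i(A_n\h{\otimes}_A M^i)$, and the coadmissibility criterion.

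\emph{Pro-coadmissibility.} Enumerate $I=\{i_1,i_2,\dots\}$ and set $P_k:=\prod_{j\le k}M^{i_j}$. Each $P_k$ is a finite direct sum of coadmissible modules, hence coadmissible. The projections $P_{k+1}\to P_k$ are surjective, and $M\cong\varprojlim_k P_k$ as Fr\'echet $A$-modules. This already follows from Proposition \ref{procoadproperties}.(i).

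\emph{The tensor product isomorphism.} Here I would use nuclearity. Over a nuclear Fr\'echet space $A_n\h{\otimes}_A-$ should commute with countable products, since in $\h{\B}c_K$ the completed tensor product with a nuclear (or here Banach) object commutes with countable limits of nuclear Fr\'echet spaces. Concretely:
\begin{enumerate}[(1)]
\item Write $A_n\h{\otimes}_A M$ as the cokernel of $A_n\h{\otimes}_K A\h{\otimes}_K M\to A_n\h{\otimes}_K M$ (the bar construction).
\item Each $M^i$ is nuclear Fr\'echet. For a Banach space $B$ and a countable product $\prod_i F_i$ of nuclear Fr\'echet spaces, the natural map $B\h{\otimes}_K\prod_i F_i\to\prod_i B\h{\otimes}_K F_i$ is an isomorphism. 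This reduces to $\h{\otimes}$ commuting with countable reduced projective limits of nuclear Fr\'echet spaces (as in \cite{SixOp}). The same holds for the Fr\'echet factor $A$.
\item Cokernels commute with countable products in $\h{\B}c_K$, at least when all the maps are strict. Strictness holds here because $A_n\h{\otimes}_A M^i\cong M^i_n$ is Banach and the relevant maps are strict surjections, so the product of strict cokernels is the cokernel of the product.
\end{enumerate}
I expect step (3), the analytic compatibility of products with $\h{\otimes}$ and cokernels, to be the main obstacle. If that route is awkward, I would instead use the finite presentation $A_n^{r}\to A_n^{s}\to M^i_n\to 0$ on each factor, but the ranks vary with $i$, so the bornological argument seems cleaner.

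\emph{The criterion.} Suppose $M$ is coadmissible. Then $A_n\otimes_A M$ is finitely generated over $A_n$, hence Banach and Noetherian. By the isomorphism it equals $\prod_i M^i_n$. A countable product of nonzero Banach spaces with infinitely many nonzero factors is not normable, so only finitely many $M^i_n$ are nonzero.

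Conversely, suppose that for each $n$ only finitely many $M^i_n$ are nonzero. Then $M_n:=A_n\h{\otimes}_A M\cong\prod_i M^i_n$ is a finite direct sum, hence finitely generated over $A_n$. The transition maps $A_n\otimes_{A_{n+1}}M_{n+1}\cong M_n$ hold factorwise. Finally, $M\cong\prod_i\varprojlim_n M^i_n\cong\varprojlim_n\prod_i M^i_n$, since the limits commute, so $M$ is coadmissible. The equality $A_n\otimes_A M^i=A_n\h{\otimes}_A M^i$ is the cited \cite[Corollary 5.38]{SixOp}.
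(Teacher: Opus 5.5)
Your proposal is correct and reaches the result by a more self-contained route than the paper. The paper's proof is short because it imports both analytic inputs from Orlik's appendix. It cites \cite[Lemma A.6]{Orlik} for the isomorphism $A_n\h{\otimes}_A\prod_iM^i\cong\prod_i(A_n\h{\otimes}_AM^i)$, and \cite[Corollary A.8]{Orlik} for the fact that a pro-coadmissible $N$ is coadmissible if and only if each $A_n\h{\otimes}_AN$ is finitely generated. It then uses Noetherianity of $A_n$ to see that $\prod_i(A_n\h{\otimes}_AM^i)$ is finitely generated only when almost all factors vanish.

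Your sketch of the isomorphism (bar construction, $\h{\otimes}$ commuting with countable products when one side is nuclear, cokernels commuting with products) is essentially a reconstruction of the cited lemma. One correction in step (3): the bar differentials $d_i$ need not be strict, and you do not need them to be. What you actually use is that a product of strict epimorphisms in $\h{\B}c_K$ is again a strict epimorphism, and that in a product of Fr\'echet spaces the closure of $\prod_i\mathrm{im}(d_i)$ is $\prod_i\overline{\mathrm{im}(d_i)}$.

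For the criterion, your non-normability argument is a valid substitute for the Noetherian one, since the isomorphism is bornological. Your converse exhibits $(\prod_iM^i_n)_n$ directly as a coherent sheaf with limit $\prod_iM^i$. This bypasses \cite[Corollary A.8]{Orlik} and needs neither the tensor isomorphism nor nuclearity. The forward direction can be made equally elementary: $\oplus_{i\le k}M^i_n$ is a direct summand of $A_n\otimes_AM$ for every $k$. If infinitely many $M^i_n$ were nonzero, this would give a strictly ascending chain in a finitely generated module over the Noetherian ring $A_n$.

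So your approach shows that the ``in particular'' statement is purely algebraic. The paper's approach is shorter because it outsources the one genuinely analytic statement, the tensor--product isomorphism, to Orlik's appendix.
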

	\begin{proof}
		The fact that $M$ is pro-coadmissible is immediate from the definition. By \cite[Lemma A.6]{Orlik}, the natural morphism
		\begin{equation*}
			A_n\h{\otimes}_A \prod_i M^i\to \prod_i (A_n\h{\otimes}_AM^i)
		\end{equation*}
		is an isomorphism for each $n$. By \cite[Corollary A.8]{Orlik}, a pro-coadmissible $A$-module $N$ is coadmissible if and only if $A_n\h{\otimes}_A N$ is finitely generated for each $n$.
		
		But now by Noetherianity of $A_n$, $\prod (A_n\h{\otimes}_AM^i)$ is a finitely generated $A_n$-module if and only if all but finitely many factors are zero, and the result follows.
	\end{proof}
	
	Now let $L/\mathbb{Q}_p$ be a finite field extension and $G$ a locally $L$-analytic group. Let $K$ be a complete nonarchimedean field extension of $L$.
	
	We say that a Fr\'echet $D(G)$-module $M$ is pro-coadmissible if it is pro-coadmissible as a $D(H_0)$-module for some compact open subgroup $H_0$. It follows from \cite[Corollary A.14]{Orlik} that this does not depend on the choice of subgroup.
	
	Dually, a locally $L$-analytic $G$-representation on a (barrelled, Hausdorff) locally convex $K$-vector space $V$ of compact type is called ind-admissible if $V'$ is a pro-coadmissible $D(G)$-module. Equivalently, $\mathrm{Res}_{H_0}^GV$ is the strict inductive limit of admissible $H_0$-subrepresentations for some (any) compact open subgroup $H_0$.
	
	Thanks to reflexivity, the duality functor induces an anti-equivalence between ind-admissible representations and pro-coadmissible modules over the distribution algebra, see \cite[Corollary A.3]{Orlik}.
	
	As in the appendix to \cite{Orlik}, we occasionally like to consider pro-coadmissible modules as complete bornological modules over $D_b(G)$, which is our notation for $D(G)$ endowed with the compactoid bornology. Explicitly,
	\begin{equation*}
		D_b(G)= \underset{g\in G/H_0}{\oplus} gD(H_0)\cong \underset{g\in G/H_0}{\oplus} D(H_0)g^{-1},
	\end{equation*} 
	which becomes a complete bornological $K$-algebra with the induced direct sum bornology.
	
	We adopt the following notation: if $M$ is a complete bornological $D(H_0)$-module and $g\in G$, then we denote by $gM$ the module over $D(gH_0g^{-1})\cong gD(H_0)g^{-1}$ obtained by twisting $M$ by $g$, i.e. the underlying complete bornological vector space of $gM$ is $M$, where we usually write $gm$ to denote the element corresponding to $m\in M$, with action 
	\begin{equation*}
		(gPg^{-1})\cdot gm=g(Pm).
	\end{equation*}
	Note that if $\widetilde{\M}$ is a complete bornological $D_b(G)$-module containing $M$ as a $D(H_0)$-submodule, then the subspace $g\cdot M\subseteq \widetilde{M}$ is a $D(gH_0g^{-1})$-submodule which is naturally isomorphic to $gM$, so that there is no ambiguity in our notation.
	
	The same convention applies to locally analytic representations.
	
	We can now introduce the key object of study for this article. Note that a typical example of an ind-admissible $G$-representation is of course any locally analytic representation $V$ such that $\mathrm{Res}_{H_0}^GV$ splits into a countable direct sum of admissible $H_0$-representations.
	
	With this in mind, we consider the following construction:
	
	Let $H_0\leq G$ be a compact open subgroup, and let $H=H_0Z_G$. For an admissible locally $L$-analytic $H$-representation $(V, \rho)$, consider the locally analytic compact induction
	\begin{equation*}
		\cind_H^GV=\left\{
		f: G\to V | \begin{matrix}\ f \text{ has compact support modulo $H$},\\ \ f(gh)=\rho(h^{-1})(f(g)) \ \forall g\in G, \ h\in H
		\end{matrix}\right\}.
	\end{equation*}
	Here, $f$ having compact support modulo $H$ means that there exists some compact subset $C$ such that $f(g)=0$ for all $g\notin CH$. As $H=H_0Z_G$ with $H_0$ compact, this is the same as having compact support modulo $Z_G$. As $H$ is open, an equivariant function $f$ has compact support modulo $H$ if and only if it is supported on finitely many $H$-cosets $gH\in G/H$.
	
	The group $G$ acts on $\cind_H^G$ via $(g*f)(g')=f(g^{-1}g')$. We can identify $\cind_H^GV$ with the locally convex $K$-vector space $\oplus_{g\in G/H} gV$, sending $f\in \cind_H^GV$ to $\sum gf(g)$. This topological structure turns $\cind_H^GV$ into a locally convex $K$-vector space, and the action above makes it a locally $L$-analytic $G$-representation. 
	
	The following easy consequence of the above was already proved in the appendix to \cite{Orlik}:
	
	\begin{prop}\label{Mackeydecomp}
		Mackey decomposition: There is a canonical isomorphism of locally $L$-analytic $H$-representations
		\begin{equation*}
			\mathrm{Res}_H^G \cind_H^G V\cong \underset{g\in H\setminus G/H}{\oplus} \cind_{H\cap gHg^{-1}}^H \mathrm{Res}_{H\cap gHg^{-1}}^{gHg^{-1}}gV.
		\end{equation*}
		In particular, $\cind_H^GV$ is an ind-admissible $G$-representation.
	\end{prop}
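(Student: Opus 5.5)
We start from the identification $\cind_H^GV\cong\bigoplus_{x\in G/H} xV$ given above, where $f\mapsto\sum_x x f(x)$. Under this identification $G$ permutes the summands: $g'$ sends $xV$ to $g'xV$, acting through $H$ on the representative. We restrict the action to $H$ and group the cosets $xH\in G/H$ according to their $H$-orbits. These orbits are exactly the double cosets $HgH$, $g\in H\setminus G/H$. This gives an $H$-stable decomposition
\begin{equation*}
\mathrm{Res}_H^G\cind_H^GV\cong\bigoplus_{g\in H\setminus G/H} W_g,\qquad W_g:=\bigoplus_{xH\subseteq HgH} xV.
\end{equation*}
As a locally convex space, $W_g$ is a sum of some of the summands of a locally convex direct sum. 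Hence it is topologically a direct summand, and the decomposition is one of locally analytic $H$-representations.

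Next we identify each $W_g$. The stabiliser in $H$ of the coset $gH$ is $H\cap gHg^{-1}$, and this group acts on the summand $gV$ by $(ghg^{-1})\cdot gv=g(hv)$. This is precisely the representation $\mathrm{Res}^{gHg^{-1}}_{H\cap gHg^{-1}}gV$ in the twisting convention fixed above. The cosets in $HgH/H$ are indexed by $H/(H\cap gHg^{-1})$ via $h\mapsto hgH$. We define a map from $\cind_{H\cap gHg^{-1}}^H gV$ to $W_g$ by sending $f$ to $\sum_{h} h\cdot f(h)$, the sum running over $h\in H/(H\cap gHg^{-1})$. Equivariance of $f$ makes this well defined, and it is $H$-equivariant and bijective. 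Since both sides are locally convex direct sums of copies of $gV$ indexed by the same set, it is also a topological isomorphism.

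For canonicity, we note that changing the double coset representative $g$ to $h_1gh_2$ changes everything only by the evident isomorphisms $gV\cong h_1gh_2V$. Here $h_2$ acts on $V$ and $h_1$ conjugates the group $H\cap gHg^{-1}$. Because $H\cap gHg^{-1}$ is open and $H=H_0Z_G$ with $H_0$ compact, the index $[H:H\cap gHg^{-1}]=[H_0:H_0\cap gHg^{-1}]$ is finite. So each Mackey summand is a finite direct sum of copies of $gV$.

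For ind-admissibility, we restrict further to $H_0$. Each summand $xV$ is an admissible $xHx^{-1}$-representation. It is therefore admissible for the open subgroup $H_0\cap xH_0x^{-1}$, and hence for $H_0$: for $V$ admissible and $U$ open in compact $H_0$, the induced (finite) module $D(H_0)\otimes_{D(U)}(xV)'$ is coadmissible. So each $W_g$, a finite sum of such pieces, is an admissible $H_0$-representation; more precisely, each $H_0$-orbit on $G/H$ is finite and yields an admissible $H_0$-representation. Since $G/H$ is countable ($G$ is second countable), $\mathrm{Res}_{H_0}^G\cind_H^GV$ is a countable locally convex direct sum of admissible $H_0$-representations. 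Dually, $(\cind_H^GV)'$ is a countable product of coadmissible $D(H_0)$-modules, which is pro-coadmissible by Corollary~\ref{coadcheck}.

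The main obstacle is this last admissibility step. One has to check carefully that restricting an admissible $xHx^{-1}$-representation to $H_0\cap xH_0x^{-1}$ and then inducing to $H_0$ preserves coadmissibility. I would handle this via the standard facts from \cite{ST}: restriction to an open subgroup preserves coadmissibility, since $D(H_0)$ is finite free over $D(U)$. I would also double-check that the dual of a countable locally convex direct sum is the corresponding product, which needs reflexivity from Proposition~\ref{procoadproperties}.
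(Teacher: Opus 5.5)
Your proof is correct and follows the same route as the paper. The paper proves this only by citing \cite[Lemma A.19, Lemma A.20]{Orlik}, and your argument is the standard one behind those citations: group the summands of $\bigoplus_{x\in G/H}xV$ into $H$-orbits (equivalently $H_0$-orbits, since $Z_G$ fixes every coset $xH$), identify each orbit with the finite induction from its stabiliser, and deduce ind-admissibility from the resulting countable direct sum of admissible $H_0$-pieces; this is also exactly what the paper's dual description $(\cind_H^GV)'\cong\prod_{g\in G/H}gV'$, regrouped by double cosets, encodes.

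Two minor points. The $H_0$-orbit of $xH$ is induced from the stabiliser $H_0\cap xHx^{-1}$, not from $H_0\cap xH_0x^{-1}$, though your general coadmissibility remark covers that case as well. The countability of $G/H$ that you invoke is a hypothesis the paper uses only implicitly.
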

	\begin{proof}
		The Mackey decomposition was already established in \cite[Lemma A.19]{Orlik}. The ind-admissibility was shown in \cite[Lemma A.20]{Orlik}.
	\end{proof}	
	
	We remark that on the dual side, a corresponding direct product decomposition exists: By the description of $\cind_H^G$ as a left adjoint to restriction (\cite[Proposition 2.1]{Orlik}), we have dually the natural isomorphism
	\begin{equation*}
		(\cind_H^GV)'\cong \mathrm{Hom}_{D_b(H)}(D_b(G), V'),
	\end{equation*}
	where we regard the right-hand side as a left $D_b(G)$-module with the aid of the usual automorphism induced by $g\mapsto g^{-1}$.
	
	Thence
	\begin{align*}
		(\cind_H^GV)'&\cong \mathrm{Hom}_{D_b(H)}(D_b(G), V')\\
		&\cong \mathrm{Hom}_{D_b(H)}(\oplus_{g\in G/H} D_b(H)g^{-1}, V')\\
		&\cong \prod_{g\in G/H} \mathrm{Hom}_{D_b(H)}(D_b(H)g^{-1}, V')
	\end{align*}
	as complete bornological vector spaces (and in particular, as Fr\'echet spaces).
	
	This allows us (taking $g=e$) to consider $V'$ as a closed subspace (even a closed $D_b(H)$-submodule) of $(\cind V)'$, and the closed subspace $\mathrm{Hom}(D_b(H)g^{-1}, V')\subseteq (\cind V)'$ can then be identified with $g\cdot V'\subseteq (\cind V)'$ as a closed $D_b(gHg^{-1})$-submodule.
	
	We remark that under the duality between $\cind V$ and $(\cind V)'$, the factor $gV'$ (now viewed as a quotient of the direct product) identifies with the dual of $gV$ -- the indices match up because the action of the distribution algebra involves an inversion to produce left modules. 
	
	In this way, we have an isomorphism of Fr\'echet spaces
	\begin{equation*}
		(\cind_H^GV)'\cong \prod_{g\in G/H} gV',
	\end{equation*}
	
	and sorting the factors as in Proposition \ref{Mackeydecomp} yields an isomorphism
	\begin{equation*}
		(\cind_H^GV)'\cong \prod_{g\in H\setminus G/H} D_b(H)\otimes_{D_b(H\cap gHg^{-1})}gV'
	\end{equation*}
	of $D_b(H)$-modules.
	
	The rest of this article is devoted to the study of this module.

	\section{Semisimplicity}
	
	In most classical incarnations, Mackey theory relies on the interplay between induction, restriction, conjugation, and semisimplicity -- one crucial input is that the notion of semisimplicity is insensitive to restricting from a compact group to an open subgroup. 
	
	If $G_0$ is a compact locally $L$-analytic group and $H_0\leq G_0$ is an open subgroup, analogous statements for coadmissible $D(G_0)$-modules resp. $D(H_0)$-modules are very subtle: the most natural notion of simplicity in this context is that of being \emph{topologically} simple, but this means that standard arguments involving Zorn's lemma are not easily available. For example, it seems not clear whether a coadmissible $D(G_0)$-module which has the property that every closed submodule has a (closed) complement can be written as a direct sum of topologically simple modules. At least to the author, it is not evident that such a module contains a topologically simple submodule, or admits a topologically simple quotient if it is finitely generated.
	
	We bypass these subtleties with the help of three observations:
	\begin{enumerate}[(i)]
		\item On the Noetherian Banach level, there is no difficulty (as every submodule of a finitely generated module is closed): we study finitely generated, semisimple $D_r(G_0)$-modules in the classical sense. In this case, one can directly adapt the classical proof (e.g. \cite[Lemma 2.7]{BH}) that a finitely generated $D_r(G_0)$-module is semisimple if and only if its restriction to $D_r(H_0)$ is semisimple.
		\item For modules over the distribution algebra, we study coadmissible modules which are topologically semisimple in the sense that they are \emph{finite} direct sums of topologically simple modules. The finite length condition ensures that we do not have to invoke Zorn's lemma to find minimal and maximal closed submodules.
		\item If $M$ is a topologically semisimple $D(G_0)$-module, it is not clear that $M$ is topologically semisimple over $D(H_0)$. We can correct this by enforcing a slightly stronger condition, and require $M$ to be \emph{uniformly semisimple} over $D(G_0)$. This yields better control at the Noetherian Banach level, so that we can use our results from (i).
	\end{enumerate}
	
	This approach allows us to adapt the classical arguments to our setting (see Proposition \ref{ssFSres}), proving Theorem \ref{IntroThm}.(ii) in Proposition \ref{topssfactors}.
	
	Recall that the Fr\'echet--Stein algebra $D(G_0)$ can be written as the limit of Banach completions $\varprojlim D_r(G_0)$, where the individual norms are dependent on a choice of an $L$-uniform open normal subgroup $U$.
	
	We briefly indicate these constructions and refer the interested reader to \cite{ST} for more details. Let $U\trianglelefteq G_0$ be an $L$-uniform open normal subgroup in the sense of \cite[Remark 2.2.5.(ii)]{OS}. In particular, $U$ is a uniform pro-$p$ group and as a $\mathbb{Q}_p$-manifold, $U$ is isomorphic to $\mathbb{Z}_p^d$ for some $d\geq 0$. Now $D^{\mathbb{Q}_p\mathrm{-la}}(\mathbb{Z}_p^d, K)$ is isomorphic to the $K$-algebra of analytic functions on a $d$-dimensional open polydisk $\mathfrak{X}^d$ of radius $1$, and we thus obtain an isomorphism of Fr\'echet $K$-vector spaces
	\begin{equation*}
		D^{\mathbb{Q}_p\mathrm{-la}}(U, K)\cong \O(\mathfrak{X}^d).
	\end{equation*}
	For a chosen topological basis $h_1, \hdots, h_d$ of $U$ as a uniform pro-$p$ group, the isomorphism above sends $h_i$ to $1+x_i$, where $x_1, \hdots, x_d$ are the variables on $\mathfrak{X}^d$.
	
	The Fr\'echet structure on the right is determined by writing $\O(\mathfrak{X}^d)$ as the inverse limit of functions on closed polydiscs with radii $r<1$. It was shown in \cite[Proposition 4.2]{ST} that if $1/p\leq r<1$, then the induced norm $|-|_r$ on $D^{\mathbb{Q}_p\mathrm{-la}}(U, K)$ is a submultiplicative algebra norm, whose Banach completion $D_r^{\mathbb{Q}_p}(U, K)$ is a Noetherian Banach algebra, which as a Banach $K$-vector space is isomorphic to a Tate algebra in $d$ variables.
	
	Writing $D(U):=D^{L\mathrm{-la}}(U,K)$ as a quotient of $D^{\mathbb{Q}_p\mathrm{-la}}(U, K)$, this yields the Fr\'echet--Stein presentation of $D(U)\cong \varprojlim D_r(U)$. For the distribution algebra $D(G_0)$, we note that $D(G_0)$ is a finite free $D(U)$-module (both on the left and the right), with a basis given by coset representatives $g_1, \hdots, g_s$ for $G_0/U$. In this way, \cite[Theorem 5.1]{ST} produces the Fr\'echet--Stein presentation for $D(G_0)$ as
	\begin{equation*}
		D(G_0)\cong \varprojlim D_r(G_0),
	\end{equation*}
	where $D_r(G_0)\cong  \oplus_{i=1}^s D_r(U) g_i^{-1}$ (where we use inverses purely for compatibility with our earlier notation).
	
	The following notation will be convenient: if $r$ happens to be of the form $r=p^{-1/p^n}$ for some $n\geq 0$, then \cite[Theorem 6.5.11]{Ardakov} yields an isomorphism of $K$-Banach algebras
	\begin{equation*}
		D_r(U)\cong D_{1/p}(U^{p^n})\rtimes_{U^{p^n}}U,
	\end{equation*}
	where we refer to \cite[Definitions 2.2.1, 2.2.3]{Ardakov} for the definition of the crossed product algebra on the right hand side. The key takeaway for us is the fact that $D_r(U)$ is free of finite rank over $D_{1/p}(U^{p^n})$, so that the $r=p^{-1/p^n}$-norm is simply induced by the $1/p$-norm for a smaller subgroup, $U^{p^n}$.
	
	In this way, we can now rewrite the above to
	\begin{equation*}
		D_r(G_0)\cong D_{1/p}(U^{p^n})\rtimes_{U^{p^n}} G_0.
	\end{equation*}
	Realizing $D_{1/p}(U^{p^n})$ as a certain Banach completion of the enveloping algebra of the Lie algebra, this can also already be pieced together from \cite[Theorem 1.4.2]{Kohlhaase}.
	
	Now recall that $H_0\leq G_0$ is some open subgroup.
	
	Since $U^{p^m}$ is contained in $H_0$ for some $m$ (as the $U^{p^m}$ form a neighbourhood basis of $e$), we can choose norms for the Fr\'echet--Stein algebras $D(G_0)$ and $D(H_0)$ compatibly, i.e. for $r=p^{-1/p^n}$ with $n>m$, we can write
	\begin{equation*}
		D_r(H_0)\cong D_{1/p}(U^{p^n})\rtimes_{U^{p^n}} H_0
	\end{equation*} 
	and
	\begin{equation*}
		D_r(G_0)\cong D_{1/p}(U^{p^n})\rtimes_{U^{p^n}}G_0,
	\end{equation*}
	so that $D_r(H_0)$ is naturally contained in $D_r(G_0)$, making $D_r(G_0)$ a finite free left and right $D_r(H_0)$-module, with a basis given by suitable coset representatives.
	
	From now on, the notation $D_r(G_0)$ will always presuppose that we have fixed an $L$-uniform open normal subgroup $U\leq G_0$, and $r$ will always be of the form $r=p^{-1/p^n}$ for some $n\geq 0$.
	
	As usual, we say that an abstract $D_r(G_0)$-module $M$ is \textbf{semisimple} if any submodule admits a complement. It is very well-known that this is equivalent to $M$ being the direct sum of simple modules, see \cite[Theorem 2.4]{Lam}.
	
	\begin{prop}
		\label{ssres}
		Let $U\leq G_0$ be an $L$-uniform open normal subgroup, and let $m$ be such that $U^{p^m}\subseteq \underset{g\in G_0/H_0}{\cap} gH_0g^{-1}$. Let $r=p^{-1/p^n}$ with $n>m$.
		
		Let $M$ be an abstract $D_r(G_0)$-module. Then $M$ is semisimple if and only if it is semisimple as a $D_r(H_0)$-module.
	\end{prop}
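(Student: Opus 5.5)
Both directions adapt the classical averaging argument (as in \cite[Lemma 2.7]{BH}), using the crossed product descriptions $D_r(G_0)\cong D_{1/p}(U^{p^n})\rtimes_{U^{p^n}}G_0$ and $D_r(H_0)\cong D_{1/p}(U^{p^n})\rtimes_{U^{p^n}}H_0$. In particular, $D_r(G_0)=\oplus_{i} g_iD_r(H_0)$ is finite free over $D_r(H_0)$ with basis a set of coset representatives $g_1,\hdots,g_t$ for $G_0/H_0$. Throughout, $|G_0/H_0|$ is invertible in $K$ because $\mathrm{char}\,K=0$.

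\emph{Semisimple over $D_r(H_0)$ implies semisimple over $D_r(G_0)$.} Let $N\subseteq M$ be a $D_r(G_0)$-submodule. By assumption there is a $D_r(H_0)$-linear projection $\pi: M\to N$. I average it:
\begin{equation*}
\tilde\pi(m)=\frac{1}{t}\sum_{i=1}^t g_i\,\pi(g_i^{-1}m).
\end{equation*}
The main check is that $\tilde\pi$ is $D_r(G_0)$-linear. It is independent of the choice of representatives, since $\pi$ commutes with $H_0$. It commutes with every $g\in G_0$, because left multiplication by $g$ permutes the cosets. It commutes with $D_{1/p}(U^{p^n})$: for $P$ in this algebra, $g_i^{-1}Pg_i$ again lies in it, since $U^{p^n}$ is normal in $G_0$ (being characteristic in the normal subgroup $U$) and conjugation by $g_i$ preserves the $1/p$-norm. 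Moreover $D_{1/p}(U^{p^n})\subseteq D_r(H_0)$, because $U^{p^n}\subseteq H_0$. Since $D_r(G_0)$ is generated as a ring by $G_0$ and $D_{1/p}(U^{p^n})$, $\tilde\pi$ is $D_r(G_0)$-linear. Finally $\tilde\pi$ restricts to the identity on $N$ because $N$ is $G_0$-stable, so $\ker\tilde\pi$ is a $D_r(G_0)$-complement to $N$.

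\emph{Semisimple over $D_r(G_0)$ implies semisimple over $D_r(H_0)$.} It suffices to treat a simple $D_r(G_0)$-module $M$, since a direct sum of semisimple modules is semisimple. First I would like the sum of all simple $D_r(H_0)$-submodules of $M$ to be nonzero, i.e. a nonzero socle. This is the main obstacle, because $M$ need not be finitely generated over $D_r(H_0)$ a priori. However, $M=D_r(G_0)m$ is cyclic, so $M$ is finitely generated over $D_r(H_0)$ (by $g_1m,\hdots,g_tm$), and $D_r(H_0)$ is Noetherian. Hence $M$ has a maximal $D_r(H_0)$-submodule and therefore a simple $D_r(H_0)$-quotient; this does not immediately give a simple submodule, so I would rather work with quotients, or argue dually as follows.

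Let $S$ be the $D_r(H_0)$-socle of $M$, or dually the $D_r(H_0)$-radical $R$, the intersection of maximal $D_r(H_0)$-submodules. For $g\in G_0$, $gR$ is again the radical, because $g$ normalizes $D_r(H_0)$: conjugation by $g$ maps $D_{1/p}(U^{p^n})$ to itself and $H_0\mapsto gH_0g^{-1}$. For this to make $R$ stable under $G_0$, I need $gH_0g^{-1}=H_0$, which is where the hypothesis $U^{p^m}\subseteq\cap_g gH_0g^{-1}$ enters. I would first reduce to $H_0$ normal by passing through the intersection $H_0'=\cap_g gH_0g^{-1}$, which still contains $U^{p^n}$: if the claim holds for normal subgroups, then semisimplicity over $G_0$ gives semisimplicity over $H_0'$, and the first direction applied to $H_0'\le H_0$ gives semisimplicity over $H_0$. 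With $H_0$ normal, $R$ is a $D_r(G_0)$-submodule and $R\ne M$, so $R=0$ by simplicity of $M$. Then $M$ embeds into a finite product of simple $D_r(H_0)$-modules, finitely many sufficing by Noetherianity and Artinian behaviour of the image. Such a finite product is semisimple, so $M$ is semisimple over $D_r(H_0)$.

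The delicate point is ensuring finiteness in the radical argument. I expect to handle it instead by Clifford theory: take a simple $D_r(H_0)$-quotient $M\to Q$ (assuming $H_0$ normal). Then $M\to\prod_i g_iQ$, $m\mapsto(g_i^{-1}m\bmod\ker)_i$, is $D_r(G_0)$-equivariant after suitable twisting and nonzero, hence injective by simplicity of $M$. Each $g_iQ$ is a simple $D_r(H_0)$-module, so $M$ embeds into a finite direct sum of simple modules and is therefore semisimple.
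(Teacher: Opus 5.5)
Your proposal is correct and follows the paper's proof. One direction uses the averaged projection $\tilde\pi$. For the other, you reduce to $H_0$ normal via $\cap_g gH_0g^{-1}$ together with the first direction, and then embed a simple $M$ into the coinduced module $\mathrm{Hom}_{D_r(H_0)}(D_r(G_0),Q)\cong\oplus_i g_iQ$ of a simple $D_r(H_0)$-quotient $Q$; your final Clifford-style step is exactly the paper's argument and closes the proof.

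The intermediate radical detour is unnecessary, and as written its claim that ``finitely many suffice'' is not justified, since a merely Noetherian module with zero radical need not be semisimple.
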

	\begin{proof}
		This is adapted from \cite[Lemma 2.7]{BH}, where the same result is shown for smooth complex representations.
		
		Since $H_0$ is open, it is of finite index in $G_0$, with $|G_0/H_0|=d$, say.
		
		Suppose first that $M$ is semisimple over $D_r(H_0)$. Let $N$ be a $D_r(G_0)$-submodule of $M$. Since $M$ is semisimple over $D_r(H_0)$, there exists a $D_r(H_0)$-submodule $N'\subseteq M$ such that $M\cong N\oplus N'$. Let $\pi: M\to N$ denote the projection map, and consider the stabilisation
		\begin{align*}
			\pi^{G_0}:& M\to N\\
			&m\mapsto \frac{1}{n}\sum_{g\in G_0/H_0} g\pi(g^{-1}m) 
		\end{align*} 
		Since $D_r(G_0)$ is a free finite $D_r(H_0)$-module with basis given by coset representatives of $G_0/H_0$ and $\pi$ is $D_r(H_0)$-linear, it follows that $\pi^{G_0}$ is a $D_r(G_0)$-linear map, and composition with the embedding $N\to M$ thus yields an idempotent, $D_r(G_0)$-linear map with image $N$. This exhibits $\mathrm{ker} (\pi^{G_0})$ as a complement to $N$, and $M$ is semisimple as a $D_r(G_0)$-module.
		
		Conversely, let $M$ be a semisimple $D_r(G_0)$-module.
		
		Replacing $H_0$ by $\cap_{g\in G_0/H_0} gH_0g^{-1}$ and using the direction above, it suffices to consider the case where $H_0$ is a normal subgroup. 
		
		It also clearly suffices to consider the case where $M$ is simple. Since $M$ is cyclic over $D_r(G_0)$, it is finitely generated over $D_r(H_0)$ -- in fact, any $m\neq M$ generates $M$ as a $D_r(G_0)$-module, and then $g_1m, \hdots, g_dm$ generate $M$ as a $D_r(H_0)$-module, where $g_1, \hdots, g_d$ is a set of representatives for $G_0/H_0$. Therefore, $M$ admits a simple $D_r(H_0)$-module quotient $N$. 
		
		The natural map $M\to N$ thus induces a non-trivial $D_r(G_0)$-module morphism $M\to \mathrm{Hom}_{D_r(H_0)}(D_r(G_0), N)$. As a $D_r(H_0)$-module, $\mathrm{Hom}_{D_r(H_0)}(D_r(G_0), N)$ is isomorphic to $\oplus_{g\in G_0/H_0} gN$ and hence semisimple: as we assume $H_0$ to be normal, $gN$ is a $D_r(H_0)$-module, which is moreover simple by simplicity of $N$.
		
		As $M$ was assumed to be simple over $D_r(G_0)$, the morphism $M\to \mathrm{Hom}(D_r(G_0), N)$ is injective, so $M$ is a $D_r(H_0)$-submodule of a semisimple module, and thus semisimple. 
	\end{proof}
	
	We now formulate different versions of semisimplicity for coadmissible modules over a Fr\'echet--Stein algebra. 
	
	\begin{defn}
		Let $A$ be a complete bornological $K$-algebra. A Fr\'echet $A$-module $M$ is called \textbf{topologically simple} if it has no non-trivial closed submodule.
		
		If $A$ is Fr\'echet--Stein, then a coadmissible $A$-module is called \textbf{topologically semisimple} if it is of finite length in $\C_A$ (the category of coadmissible $A$-modules) and each closed submodule has a complement. 
	\end{defn}
	
	For the notion of topological simplicity, we allow $A$ to be any complete bornological algebra rather than a Fr\'echet--Stein algebra, so that we can also talk of topologically simple $D_b(G)$-modules for $G$ non-compact.
	
	We also remark that if $A$ is a Fr\'echet--Stein algebra, then a topologically simple coadmissible $A$-module is the same as a simple object in the category $\C_A$, thanks to \cite[Lemma 3.6]{ST}.
	
	Recall from \cite[Lemma 3.9]{ST} that a coadmissible $D(G_0)$-module $M$ which has the property that each $D_{r_m}(G_0)\otimes_{D(G_0)}M$ is simple for a sequence $r_m$ tending to $1$, is topologically simple -- in fact, it is even simple as an abstract $D(G_0)$-module. The converse seems far from clear and quite possibly false. To exploit techniques both on the Fr\'echet and on the Banach level, we therefore make the following definition:
	
	\begin{defn}
		Let $A\cong\varprojlim A_n$ be a Fr\'echet--Stein $K$-algebra. A coadmissible $A$-module $M\cong\varprojlim M_n$ is called \textbf{uniformly simple} if $M_n$ is a simple $A_n$-module for all sufficiently large $n$.
		
		A coadmissible $A$-module $M$ is called \textbf{uniformly semisimple} if $M$ is the finite direct sum of uniformly simple $A$-modules.
	\end{defn}  
	
	We warn the reader that the notion of uniform (semi-)simplicity does depend on the choice of Fr\'echet--Stein presentation $A\cong \varprojlim A_n$.

	The following are easy topological analogues of basic properties of semisimple modules.
	
	\begin{lem}\label{closedss}
		Closed submodules of topologically semisimple $A$-modules are topologically semisimple.
	\end{lem}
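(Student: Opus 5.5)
Let $M$ be a topologically semisimple coadmissible $A$-module and $N\subseteq M$ a closed submodule. We check the two defining conditions for $N$: finite length in $\C_A$, and that every closed submodule of $N$ has a complement.

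\emph{Finite length.} Since $N$ is closed in the coadmissible module $M$, it is itself coadmissible by \cite[Lemma 3.6]{ST}. As $\C_A$ is abelian, $N$ is a subobject of $M$ in $\C_A$. A subobject of a finite length object in an abelian category has finite length, bounded by that of $M$.

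\emph{Complements.} Let $P\subseteq N$ be a closed submodule. Then $P$ is also closed in $M$, since $N$ is closed in $M$. By hypothesis there is a closed submodule $Q\subseteq M$ with $M=P\oplus Q$, meaning a topological direct sum of coadmissible modules. Set $Q':=Q\cap N$, which is a closed submodule of $N$. We check that $N=P\oplus Q'$.
\begin{itemize}
\item[] \emph{Trivial intersection.} $P\cap Q'\subseteq P\cap Q=0$.
\item[] \emph{Sum.} Let $n\in N$ and write $n=p+q$ with $p\in P$ and $q\in Q$. Since $p\in P\subseteq N$, we get $q=n-p\in N$, so $q\in Q'$.
\end{itemize}
This is the usual modular law argument, and it shows $N=P\oplus Q'$ algebraically.

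\emph{Topology.} It remains to see that the algebraic decomposition $N=P\oplus Q'$ is topological. The projection $M\to P$ along $Q$ is continuous, and it restricts to a continuous map $N\to P$ with kernel $Q'$. So the decomposition is topological.

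Alternatively, one can argue entirely in $\C_A$: the addition map $P\oplus Q'\to N$ is a continuous bijective $A$-linear map between coadmissible modules, and is therefore a topological isomorphism by \cite[Lemma 3.6]{ST}.

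Hence $N$ is topologically semisimple.

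The only point needing care is the last one, that the complement is closed and the sum is topological. It is handled by the strictness and closed-image properties of morphisms in $\C_A$ from \cite[Lemma 3.6]{ST}, or equivalently by restricting the continuous projection; neither step is a real obstacle.
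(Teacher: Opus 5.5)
Your proposal is correct and follows the paper's own argument. Finite length passes to the closed (hence coadmissible) subobject $N$ in $\C_A$, and the complement of $P$ in $N$ is $N\cap Q$ by the modular law, which is exactly the paper's $N\cong S\oplus(N\cap S')$; your check that this decomposition is topological just spells out what the paper leaves implicit.
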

	\begin{proof}
		The finite length is immediate, and the existence of complements is exactly the same argument as in the algebraic case: If $S\subseteq N\subseteq M$ is a sequence of closed $A$-submodules and $M$ is topologically semisimple, write $M\cong S\oplus S'\cong N\oplus N'$ and show that $N\cong S\oplus (N\cap S')$.
	\end{proof}
	
	\begin{lem}\label{ssdirectsum}
		A coadmissible $A$-module is topologically semisimple if and only if it is a finite direct sum of topologically simple $A$-modules.
	\end{lem}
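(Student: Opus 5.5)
Both directions can be run the classical way. The one input beyond pure algebra is that the category $\C_A$ of coadmissible modules is abelian, with images of morphisms closed and morphisms strict (\cite[Lemma 3.6]{ST}, or Proposition \ref{procoadproperties}). Throughout, ``submodule'' means closed submodule, and ``complement'' means a closed submodule $N'$ with $N\cap N'=0$ and $N+N'=M$. Because closed submodules of coadmissible modules are again coadmissible, such a decomposition is automatically a topological direct sum in $\C_A$.

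Forward direction. Let $M$ be topologically semisimple; I induct on the length of $M$ in $\C_A$. If $M=0$ there is nothing to prove. Otherwise, since $M$ has finite length, I pick a nonzero closed submodule $S$ of minimal length. By \cite[Lemma 3.6]{ST} (simple objects of $\C_A$ are exactly the topologically simple modules), $S$ is topologically simple. By hypothesis $S$ has a closed complement $S'$, so $M\cong S\oplus S'$. By Lemma \ref{closedss}, $S'$ is again topologically semisimple, and its length is strictly smaller. Induction then writes $S'$, and hence $M$, as a finite direct sum of topologically simple modules.

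Converse direction. Let $M=S_1\oplus\dots\oplus S_k$ with each $S_i$ topologically simple. Finite length is immediate, since the length is $k$. For complements, take a closed submodule $N\subseteq M$ and follow the classical argument. Among the subsets $J\subseteq\{1,\dots,k\}$ with $N\cap \bigoplus_{j\in J}S_j=0$, choose one maximal with respect to inclusion; this is possible because the set of indices is finite, so no Zorn's lemma is needed. Put $P=\bigoplus_{j\in J}S_j$; this is closed, being a direct summand. I claim $N+P=M$. For any $i\notin J$, consider the submodule $S_i\cap (N+P)$ of $S_i$:
\begin{itemize}
\item $N+P$ is closed, since it is the image of the morphism $N\oplus P\to M$ in the abelian category $\C_A$. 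Hence $S_i\cap(N+P)$ is closed.
\item By topological simplicity of $S_i$, this intersection is either $0$ or $S_i$.
\item If it were $0$, then $N\cap(P\oplus S_i)=0$, which contradicts the maximality of $J$.
\end{itemize}
So $S_i\subseteq N+P$ for every $i$, giving $N+P=M$. Together with $N\cap P=0$, this shows $P$ is a closed complement of $N$.

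The main obstacle is topological rather than algebraic: checking that the sums and intersections used above really are closed. This rests entirely on the facts that morphisms in $\C_A$ have closed image and that $\C_A$ is abelian. Everything else is the standard finite-length argument.
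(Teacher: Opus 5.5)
Your proof is correct and follows essentially the same route as the paper. The converse is the paper's maximal-$J$ argument, with closedness of $N+P$ coming from $\C_A$ being abelian. Your forward direction, which splits off a minimal nonzero closed submodule and inducts on length using Lemma \ref{closedss}, is just a rearrangement of the paper's argument, which splits the successive short exact sequences of a composition series with the same lemma.
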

	\begin{proof}
		Let $M$ be a topologically semisimple $A$-module. As $M$ is of finite length in $\C_A$, it has a finite filtration by coadmissible (therefore closed) $A$-submodules $M^i$, with topologically simple subquotients. By Lemma \ref{closedss}, the short exact sequence
		\begin{equation*}
			0\to M^{i-1}\to M^i\to M^i/M^{i-1}\to 0
		\end{equation*}  
		splits for all $i$, so $M$ is a finite direct sum of topologically simple modules.
		
		Conversely, if $M=\oplus_{i\in I} M^i$ is a finite direct sum of topologically simple $A$-module $M^i$, then it is of finite length in $\C_A$. If $N\subseteq M$ is any closed submodule, let $J\subseteq I$ be a maximal subset such that 
		\begin{equation*}
			N\cap (\oplus_{i\in J} M^j)=\{0\}.
		\end{equation*} 
		Thus the natural morphism $N\oplus (\oplus_JM^j)\to M$ is injective, and we can regard $N':=N\oplus (\oplus_JM^j)$ as a submodule of $M$. As $N'$ is coadmissible, it is closed in $M$, so for each $i\in I$, $N'\cap M_i=0$ or $N'\cap M_i=M_i$. The maximality of $J$ then forces $N'\cap M_i=M_i$ for all $i$, i.e. $N'=M$, and $\oplus_JM^j$ is a complement to $N$. 
	\end{proof}
	
	It follows in particular that the finite direct sum of topologically semisimple modules is again topologically semisimple.
	
	\begin{lem}\label{uniftop}
		If $M$ is a coadmissible $A$-module which is uniformly semisimple (relative to some Fr\'echet--Stein presentation of $A$), then $M$ is topologically semisimple.
	\end{lem}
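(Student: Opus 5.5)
By Lemma \ref{ssdirectsum}, it suffices to show that every uniformly simple coadmissible $A$-module $M$ is topologically simple. A uniformly semisimple module is by definition a finite direct sum of uniformly simple modules, so it will then be a finite direct sum of topologically simple modules, hence topologically semisimple.

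So let $M\cong\varprojlim M_n$ be uniformly simple, with $M_n$ a simple $A_n$-module for all $n\geq n_0$. The essential case is $M\neq 0$, which forces $M_n\neq 0$ for large $n$; if $M=0$ it contributes nothing to the direct sum and can be dropped. Let $N\subseteq M$ be a closed submodule. Since $M$ is coadmissible and $N$ is closed, \cite[Lemma 3.6]{ST} shows that $N$ is coadmissible, and the same holds for $M/N$. Exactness of $A_n\otimes_A-$ on coadmissible modules (flatness of $A_n$ over $A$, \cite{ST}) gives an injection $N_n:=A_n\otimes_AN\hookrightarrow M_n$. Hence for each $n\geq n_0$ the image of $N_n$ is either $0$ or all of $M_n$.

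Next I would show that this dichotomy is independent of $n$. The transition maps $M_{n+1}\to M_n$ have dense image, and $N_n\cong A_n\otimes_{A_{n+1}}N_{n+1}$. So if $N_{n+1}=0$ then $N_n=0$, and if $N_{n+1}=M_{n+1}$ then $N_n=A_n\otimes_{A_{n+1}}M_{n+1}=M_n$. The same base change argument run downward shows that if $N_n=M_n$ for some $n\geq n_0$, then $N_{n'}=M_{n'}$ for all $n_0\leq n'\leq n$. Thus the two alternatives cannot switch as $n$ grows: either $N_n=0$ for all $n\geq n_0$, or $N_n=M_n$ for all $n\geq n_0$. In the first case $N=\varprojlim N_n=0$. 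In the second case $(M/N)_n=0$ for all large $n$, so $M/N=0$ and $N=M$. Hence $M$ is topologically simple.

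The main point requiring care is the identification of $N$ with $\varprojlim N_n$ and the injectivity of $N_n\to M_n$. Both rest on closed submodules of coadmissible modules being coadmissible and on flatness, which \cite[Lemma 3.6, Remark 3.2]{ST} provide. Alternatively, \cite[Lemma 3.9]{ST}, recalled above, gives topological simplicity of $M$ directly, since $M_n$ is simple along a cofinal sequence. I would cite that lemma as the main step and keep the argument above as justification. The rest of the proof is formal.
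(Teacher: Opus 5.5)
Your proposal is correct and follows the paper's proof: the paper likewise reduces, via Lemma~\ref{ssdirectsum}, to the fact that a uniformly simple module is topologically simple, and it treats that fact as immediate (it is \cite[Lemma 3.9]{ST}, recalled just before the definition). Your direct argument for this step is a valid self-contained justification: it uses that closed submodules of coadmissible modules are coadmissible and that $A_n$ is flat over $A$. The density of the transition maps, which you mention, is never actually used.
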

	\begin{proof}
		This is immediate, as uniformly simple implies topologically simple.
	\end{proof}
	
	We can now prove a Fr\'echet--Stein version of Proposition \ref{ssres}.
	
	\begin{prop}\label{ssFSres}
		Let $G_0$ be a compact locally $L$-analytic group, and let $H_0\leq G_0$ be an open subgroup. Let $M$ be a coadmissible $D(G_0)$-module.
		\begin{enumerate}[(i)]
			\item If $M$ is topologically semisimple over $D(H_0)$, then $M$ is topologically semisimple over $D(G_0)$.
			\item If $M$ is uniformly semisimple over $D(G_0)$ (with respect to some Fr\'echet--Stein presentation $D(G_0)\cong \varprojlim D_r(G_0)$), then $M$ is topologically semisimple over $D(H_0)$.
		\end{enumerate}
	\end{prop}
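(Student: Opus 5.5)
For (i) I would mimic the averaging argument of Proposition \ref{ssres}, now on the Fréchet level. Note first that $M$ is coadmissible over $D(H_0)$ because $D(G_0)$ is finite free over $D(H_0)$. Every $D(G_0)$-submodule is a $D(H_0)$-submodule, so a strictly increasing chain of closed $D(G_0)$-submodules is also one of closed $D(H_0)$-submodules. Hence finite length in $\C_{D(H_0)}$ gives finite length in $\C_{D(G_0)}$; the closed submodules in both categories are exactly the coadmissible ones, by \cite[Lemma 3.6]{ST}.

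For complements, take a closed $D(G_0)$-submodule $N\subseteq M$ and choose a $D(H_0)$-complement $N'$. This gives a continuous $D(H_0)$-linear projection $\pi:M\to N$. I would then form
\begin{equation*}
\pi^{G_0}(m)=\frac{1}{|G_0/H_0|}\sum_{g\in G_0/H_0} g\pi(g^{-1}m).
\end{equation*}
This is well defined, continuous and $D(G_0)$-linear, because $D(G_0)=\oplus_g D(H_0)g^{-1}$. It is idempotent with image $N$, so its kernel is a closed $D(G_0)$-complement to $N$.

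For (ii), Lemma \ref{ssdirectsum} together with the remark after it shows that finite direct sums of topologically semisimple modules are topologically semisimple, so it suffices to treat $M$ uniformly simple over $D(G_0)$. Fix $U$ and $m$ as in Proposition \ref{ssres}. For all large $r=p^{-1/p^n}$ the module $M_r=D_r(G_0)\otimes_{D(G_0)}M$ is then simple over $D_r(G_0)$, with $n>m$. Proposition \ref{ssres} shows that $M_r$ is semisimple over $D_r(H_0)$. Since $D_r(G_0)\otimes_{D(G_0)}M\cong D_r(H_0)\otimes_{D(H_0)}M$ by compatibility of the presentations, $M_r$ is finitely generated over the Noetherian ring $D_r(H_0)$. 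It is therefore a finite direct sum of simple $D_r(H_0)$-modules. Moreover its length is at most $|G_0/H_0|$: the proof of Proposition \ref{ssres} embeds $M_r$ into $\oplus_{g} gN$ after passing to a normal core $H_0'$, so the length over $D_r(H_0')$, and hence over $D_r(H_0)$, is at most $|G_0/H_0'|$. Call this uniform bound $C$.

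The main step is to get finite length in $\C_{D(H_0)}$ from this uniform bound. Let $0=N^0\subsetneq N^1\subsetneq\dots\subsetneq N^k$ be a chain of coadmissible $D(H_0)$-submodules of $M$. Localisation $D_r(H_0)\otimes_{D(H_0)}-$ is exact on coadmissible modules, and a nonzero coadmissible module has nonzero $r$-component for all large $r$. So for $r$ large every quotient $N^i/N^{i-1}$ has nonzero localisation, and the chain stays strict in $M_r$. This forces $k\le C$, so $M$ has finite length and is a finite direct sum of topologically simple pieces once complements exist.

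For complements, take a closed $D(H_0)$-submodule $N$ and set $N_r=D_r(H_0)\otimes N\subseteq M_r$. I would build compatible complements: choose the $D_r(H_0)$-linear projections $\pi_r:M_r\to N_r$ coherently, then pass to the limit $\varprojlim\pi_r$. I expect the compatibility to be the hardest point, since complements at different levels need not match. My first attempt is to use the finite length just proved to reduce to $N$ topologically simple with $M/N$ of smaller length, and induct. A cleaner alternative I would also try is to use that $\mathrm{Hom}$ spaces stabilise: the sets $\mathrm{Hom}_{D_r(H_0)}(M_r,N_r)$ form an inverse system of finite-dimensional spaces, surjective onto the sets of splittings, so compatible splittings exist by a Mittag-Leffler argument on nonempty affine spaces of splittings. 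The limit of compatible projections is then a continuous idempotent on $M$ with image $N$, whose kernel is the required closed complement.
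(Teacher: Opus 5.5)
\textbf{Part (i) and finite length.} Part (i) is the paper's averaging argument and is fine. In (ii), your chain argument (exactness of $D_r(H_0)\otimes_{D(H_0)}-$, nonvanishing of nonzero coadmissible modules at large $r$, and the uniform bound $|G_0/H_0'|$ on the $D_r(H_0)$-length of $M_r$) is a valid direct proof of the finite-length statement that the paper imports from Reichardt.

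\textbf{The gap: the complement step.} For a closed $N\subseteq M$, the sets $S_r$ of $D_r(H_0)$-linear splittings of $N_r\subseteq M_r$ are nonempty torsors under $\mathrm{Hom}_{D_r(H_0)}(M_r/N_r,N_r)$. The obstruction to choosing splittings compatibly is essentially $\varprojlim^1_r\mathrm{Hom}_{D_r(H_0)}(M_r/N_r,N_r)$.

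Your Mittag-Leffler argument therefore needs these Hom spaces to be finite-dimensional over $K$, or the images of the transition maps to stabilise. Nothing in your proposal supports either claim. Since everything is semisimple of bounded length, these Hom spaces are finite sums of division rings $\mathrm{End}_{D_r(H_0)}(S)$ for simple $D_r(H_0)$-modules $S$. Such $S$ are typically infinite-dimensional over $K$, so there is no automatic bound on $\dim_K\mathrm{End}_{D_r(H_0)}(S)$. You would need a Schur/Quillen-type finiteness result over the noncommutative Noetherian Banach algebras $D_r(H_0)$, which you neither prove nor can quote from the setup. Without it, a decreasing chain of nonempty affine subspaces need not stabilise, and $\varprojlim_r S_r$ can be empty. (Also, the Hom spaces do not ``surject onto'' the splittings; they act on them.)

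Your alternative of inducting on length only reduces to splitting a single extension $0\to N\to M\to M/N\to 0$, which is the same problem. In short, finite length in $\C_{D(H_0)}$ plus levelwise semisimplicity does not formally give topological semisimplicity. The missing idea is to use the $D(G_0)$-structure at the Fr\'echet level, which you never do.

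\textbf{How the paper closes it.} The paper's argument avoids all compatibility questions:
\begin{enumerate}[(1)]
\item Using (i), replace $H_0$ by its normal core, so $H_0\trianglelefteq G_0$, and reduce to $M$ uniformly simple.
\item Finite length gives a topologically simple quotient $Q$ of $M$ in $\C_{D(H_0)}$. The quotient map induces a nonzero $D(G_0)$-linear map $\theta\colon M\to \mathrm{Hom}_{D(H_0)}(D(G_0),Q)\cong\oplus_{g\in G_0/H_0}gQ$.
\item $\theta$ is a morphism of coadmissible modules, hence continuous with closed kernel. Since $M$ is uniformly simple, hence topologically simple over $D(G_0)$, $\theta$ is injective. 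By strictness its image is closed.
\item Each $gQ$ is topologically simple over $D(H_0)$ because $H_0$ is normal. So the target is topologically semisimple by Lemma \ref{ssdirectsum}, and Lemma \ref{closedss} gives the claim for $M$.
\end{enumerate}
This is the second half of the proof of Proposition \ref{ssres} run at the Fr\'echet level. You can substitute it directly for your complement step, keeping your finite-length argument.
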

	
	\begin{proof}
		\begin{enumerate}[(i)]
			\item Let $M$ be a topologically semisimple over $D(H_0)$. Since any closed $D(G_0)$-submodule is also a closed $D(H_0)$-submodule, it is immediate that $M$ is of finite length in $\C_{D(G_0)}$. It remains to verify that every closed $D(G_0)$-submodule $N$ has a complement, for which we can use the same argument as in Proposition \ref{ssres}.
			\item[(ii)] As we have already proven (i), we can replace $H_0$ by $\cap gH_0g^{-1}$ as before, and assume that $H_0$ is normal in $G_0$. Assume that $M$ is uniformly simple over $D(G_0)$, the general case will then follow by taking finite direct sums.
			
			Note that 
			\begin{equation*}
				D_r(G_0)\otimes_{D(G_0)}M\cong D_r(H_0)\otimes_{D(H_0)}M,
			\end{equation*} 
			e.g. by our description of the distribution algebras at the beginning of the section. By assumption, $M_r:=D_r(G_0)\otimes_{D(G_0)}M$ is a simple $D_r(G_0)$-module, and therefore semisimple over $D_r(H_0)$ by Proposition \ref{ssres}.
			
			We can even bound the length of $M_r$ as a $D_r(H_0)$-module, independently of $r$. Let $|G_0/H_0|=d$, and let $g_1, \hdots, g_d$ denote a set of coset representatives. For any simple $D_r(H_0)$-submodule $N\subseteq M_r$, the translates $g_iN$ are also simple $D_r(H_0)$-submodules (as $H_0$ is normal), and $\sum g_iN=M_r$ by simplicity of $M_r$ over $D_r(G_0)$. It follows that $\mathrm{length}_{D_r(H_0)}(M_r)\leq d$ for each $r$.
			
			We can thus apply \cite[Lemma 4.8.(ii)]{Reichardt} to deduce that $M$ has finite length in $\C_{D(H_0)}$.
			
			In particular, $M$ admits a topologically simple quotient $N$ in $\C_{D(H_0)}$, yielding a non-zero $D(G_0)$-linear map $\theta: M\to \mathrm{Hom}_{D(H_0)}(D(G_0), N)\cong \oplus g_i N$. As $\theta$ is a morphism of coadmissible modules, it is continuous with respect to the Fr\'echet structures. Thus the kernel is closed, so that the topological simplicity of $M$ over $D(G_0)$ forces the morphism to be injective. Moreover, the image of $\theta$ is closed, as morphisms of coadmissible moduels are strict, so $\theta$ realizes $M$ as a closed $D(G_0)$-submodule of $\oplus g_iN$. 
			
			Now $\oplus g_iN$ is a topologically semisimple $D(H_0)$-module, as each $g_iN$ is topologically simple. By Lemma \ref{closedss}, $M$ is then topologically semisimple over $D(H_0)$.   
		\end{enumerate}
	\end{proof}
	
	\begin{prop}\label{topssfactors}
		Semisimplicity: Let $G$ be a locally $L$-analytic group, let $H_0\leq G$ be a compact open subgroup, and let $H=H_0Z_G$. If $V$ is an admissible locally $L$-analytic $H$-representation such that $M=V'$ is uniformly semisimple over $D(H_0)$, then
		\begin{equation*}
			D(H)\otimes_{D(H\cap gHg^{-1})}gM
		\end{equation*}
		is a topologically semisimple $D(H_0)$-module for any $g\in G$.
	\end{prop}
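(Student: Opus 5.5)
We reduce everything to the compact groups $H_0$ and $H_0\cap gH_0g^{-1}$ and then apply Proposition \ref{ssFSres} twice. Since $Z_G$ is central, $gHg^{-1}=(gH_0g^{-1})Z_G$, and $H\cap gHg^{-1}\supseteq (H_0\cap gH_0g^{-1})Z_G$ with finite index. The first step is to rewrite the Mackey summand as a $D(H_0)$-module. I would observe that $H=H_0Z_G$ acts on $D(H)\otimes_{D(H\cap gHg^{-1})}gM$, and that as an $H_0$-module this summand is a finite direct sum of modules of the form
\begin{equation*}
	D(H_0)\otimes_{D(H_0\cap g'H_0g'^{-1})} g'M,
\end{equation*}
with $g'$ running over finitely many elements of $HgH$ (of the form $zg$ or $h_0g$). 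This comes from decomposing the double coset $HgH$ into $H_0$-double cosets; only finitely many occur modulo the centre, since $H_0$ is compact and $H_0\cap gH_0g^{-1}$ is open. Central elements act compatibly on both sides, so they only permute or identify summands. By Lemma \ref{ssdirectsum}, finite direct sums of topologically semisimple modules are topologically semisimple, so it suffices to treat a single such summand. Relabelling, we may assume $g'=g$ and set $J_0:=H_0\cap gH_0g^{-1}$, which is open in both $H_0$ and $gH_0g^{-1}$.

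Next, $gM$ is uniformly semisimple over $D(gH_0g^{-1})$ with respect to the conjugated Fréchet--Stein presentation (use $gUg^{-1}$ as the uniform subgroup), because conjugation by $g$ is an isomorphism of all the Banach levels. Proposition \ref{ssFSres}(ii), applied to the compact group $gH_0g^{-1}$ and its open subgroup $J_0$, then shows that $gM$ is topologically semisimple over $D(J_0)$. By Lemma \ref{ssdirectsum}, $gM\cong\bigoplus_j N_j$ is a finite sum of topologically simple coadmissible $D(J_0)$-modules. Since induction commutes with finite direct sums, $D(H_0)\otimes_{D(J_0)}gM\cong\bigoplus_j D(H_0)\otimes_{D(J_0)}N_j$.

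It therefore remains to show that the induced module $D(H_0)\otimes_{D(J_0)}N$ is topologically semisimple over $D(H_0)$ whenever $N$ is topologically simple over $D(J_0)$. For this I would use Proposition \ref{ssFSres}(i) with the group pair $J_0^\circ:=\bigcap_{h\in H_0/J_0}hJ_0h^{-1}$, which is normal and open, inside $H_0$. As a $D(J_0^\circ)$-module, the induced module is $\bigoplus_{h\in H_0/J_0} h\,\mathrm{Res}_{J_0^\circ}^{J_0}N$, which is finite and free over $D(J_0)$. So it is enough that each $\mathrm{Res}\,N$ be topologically semisimple over $D(J_0^\circ)$.

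This last point is the main obstacle: Proposition \ref{ssFSres}(ii) needs uniform semisimplicity, and topological simplicity of $N$ alone is not enough. To get around it, I would not pass through $N$. Instead I would apply Proposition \ref{ssFSres}(ii) directly to $gM$ and the smaller open subgroup $J_0^\circ\le gH_0g^{-1}$. This gives that $gM$ is topologically semisimple over $D(J_0^\circ)$, and hence, by Lemma \ref{closedss}, so is each summand $N_j$, being a closed $D(J_0^\circ)$-submodule. Each conjugate $h\,\mathrm{Res}\,N_j$ is then topologically semisimple over $D(J_0^\circ)$ as well, since $J_0^\circ$ is normal in $H_0$. So the induced module is a finite direct sum of topologically semisimple $D(J_0^\circ)$-modules, and it is coadmissible because it is finite over $D(H_0)$. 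Proposition \ref{ssFSres}(i) now shows it is topologically semisimple over $D(H_0)$, which completes the argument.
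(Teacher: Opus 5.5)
\paragraph{The gap is in your first step.} The Mackey summand is \emph{not} a finite direct sum of modules $D(H_0)\otimes_{D(H_0\cap g'H_0g'^{-1})}g'M$.

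Since $Z_G\subseteq H\cap gHg^{-1}$, the group $H_0$ already acts transitively on $H/(H\cap gHg^{-1})$; equivalently, $HgH=H_0gH$. So as a $D(H_0)$-module the summand is the single module $D(H_0)\otimes_{D(H_0\cap gHg^{-1})}gM$. In general $H_0\cap gHg^{-1}$ is strictly larger than $H_0\cap gH_0g^{-1}$. The central character built into $M$ does not ``permute or identify summands''; it makes the summand a proper quotient. Your decomposition is really the one for induction from $H_0$, not from $H$.

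Concretely, take the Heisenberg example of Section 5, with $g$ having $(1,2)$-entry $p^{-1}$ and the other off-diagonal entries $0$, and $M=K_{\chi,\mu}'$. Since $H$ is normal, the summand is $gM$, which is one-dimensional. On the other hand, $H_0\cap gH_0g^{-1}$ consists of the elements of $H_0$ with $(2,3)$-entry in $p\mathbb{Z}_p$, so it has index $p$. Hence every module $D(H_0)\otimes_{D(H_0\cap g'H_0g'^{-1})}g'M$ with $g'\in HgH$ is $p$-dimensional, and no direct sum of such modules is the summand. So ``it suffices to treat a single such summand'' does not follow as written.

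\paragraph{The rest of your argument is the paper's proof.} Your $J_0^\circ$ is the paper's $H'$. The paper proves that $D(H_0)\otimes_{D(H_0\cap gH_0g^{-1})}gM$ is topologically semisimple exactly as you do:
\begin{enumerate}[(i)]
\item apply Proposition~\ref{ssFSres}(ii) to $gM$ over $D(J_0^\circ)$;
\item translate by $h\in H_0$, using that $J_0^\circ$ is normal in $H_0$;
\item conclude with Proposition~\ref{ssFSres}(i).
\end{enumerate}
Your detour through the simple constituents $N_j$ is harmless but unnecessary.

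\paragraph{How to repair the first step.} The paper's route: the natural surjection $H_0/(H_0\cap gH_0g^{-1})\to H/(H\cap gHg^{-1})$ exhibits the summand as a quotient of $D(H_0)\otimes_{D(H_0\cap gH_0g^{-1})}gM$ by a coadmissible, hence closed, submodule. A quotient of a topologically semisimple module by a closed submodule is isomorphic to a complement of that submodule, hence topologically semisimple by Lemma~\ref{closedss}.

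Alternatively, note that $J_0^\circ\subseteq H_0\cap gHg^{-1}$ and $J_0^\circ$ is normal in $H_0$. You can then run your argument directly on $D(H_0)\otimes_{D(H_0\cap gHg^{-1})}gM\cong\bigoplus_{h\in H_0/(H_0\cap gHg^{-1})}hgM$, viewed as a $D(J_0^\circ)$-module.
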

	\begin{proof}
		Since $H_0/(H_0\cap gH_0g^{-1})$ surjects onto $H/(H\cap gHg^{-1})$, the module
		\begin{equation*}
			D(H)\otimes_{D(H\cap gHg^{-1})}gM
		\end{equation*}
		is a quotient of 
		\begin{equation*}
			D(H_0)\otimes_{D(H_0\cap gH_0g^{-1})}gM,
		\end{equation*}
		so it suffices to prove the topological semisimplicity of the latter.
		
		We can assume without loss of generality that $M$ is uniformly simple over $D(H_0)$ (with respect to some Fr\'echet--Stein presentation associated to some open normal $U\leq H$ and some $r_m$ as before). Then $gM$ is uniformly simple over $D(gH_0g^{-1})$ (with respect to $gUg^{-1}$ and $r_m$). 
		
		Let
		\begin{equation*}
			H':=\underset{h\in H_0/H_0\cap gH_0g^{-1}}{\cap} h(H_0\cap gH_0g^{-1})h^{-1},
		\end{equation*}
		an open normal subgroup of $H_0$, contained in $gH_0g^{-1}$. By \ref{ssFSres}.(ii), $gM$ is a topologically semisimple $D(H')$-module.
		
		If $h\in H_0$, then $hgM$ is a topologically semisimple module over $D(H')\cong hD(H')h^{-1}$, as $H'$ is normal in $H_0$.
		
		Since $D(H_0)$ is free of finite rank over $D(H_0\cap gH_0g^{-1})$, it follows that
		\begin{equation*}
			D(H_0)\otimes_{D(H_0\cap gH_0g^{-1})}gM\cong \underset{h\in H_0/H_0\cap gH_0g^{-1}}{\oplus} hgM
		\end{equation*}
		is a finite direct sum of topologically semisimple $D(H')$-modules, and thus topologically semisimple over $D(H')$. We can thus apply Proposition \ref{ssFSres}.(i) to deduce that the module is topologically semisimple over $D(H_0)$.
	\end{proof}

	\section{The irreducibility criterion}
	
	In this section, let $G$ denote a locally $L$-analytic group, let $H_0\leq G$ be a compact open subgroup, and $H=H_0Z_G$. 
	
	The discussion in the previous section now allows us to formulate two irreducibility criteria for compactly induced representations, one on the Fr\'echet--Stein level, and one on the Noetherian Banach level.
	
	\begin{thm}[{Mackey irreducibility criterion, Fr\'echet version}]\label{FSMackey}
		Let $V$ be an admissible locally $L$-analytic $H$-representation such that $M=V'$ is a uniformly simple $D(H_0)$-module (relative to some Fr\'echet--Stein presentation). Assume that
		\begin{equation*}
			\mathrm{Hom}_{D(H_0\cap gH_0g^{-1})}(gM, M)=\begin{cases}
				K \ \text{if $g\in H$}\\
				0 \ \text{if $g\in G\setminus H$}.
			\end{cases}
		\end{equation*}
		Then $\mathrm{Hom}_{D_b(H)}(D_b(G), M)$ is a topologically simple $D_b(G)$-module. In particular, its dual $\cind_H^G V$ is a topologically irreducible $G$-representation.
	\end{thm}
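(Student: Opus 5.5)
Write $\M:=\mathrm{Hom}_{D_b(H)}(D_b(G),M)\cong\prod_{g\in G/H} gM$. The plan is the classical Mackey argument, run inside the abelian category of pro-coadmissible modules: locate $M$ inside any nonzero closed submodule $\mathcal{N}\subseteq \M$, then use that the translates of $M$ generate $\M$ topologically.

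\textbf{Step 1: the Mackey summands as $D(H_0)$-modules.} By the discussion after Proposition \ref{Mackeydecomp},
\begin{equation*}
\M\cong \prod_{g\in H\setminus G/H} D(H)\otimes_{D(H\cap gHg^{-1})}gM .
\end{equation*}
By Proposition \ref{topssfactors}, each factor is a topologically semisimple $D(H_0)$-module. By Lemma \ref{ssdirectsum}, each factor is therefore a finite direct sum of topologically simple modules. The $g=e$ factor is $M$ itself, since $Z_G$ acts through $H$.

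\textbf{Step 2: no shared constituents.} For $g\notin H$, no topologically simple constituent $S$ of the $g$-factor is isomorphic to a quotient of $M$. Any such isomorphism would give a nonzero map $M\to S\hookrightarrow D(H_0)\otimes_{D(H_0\cap gH_0g^{-1})}gM$. Frobenius reciprocity (coinduction and induction agree, as $D(H_0)$ is finite free) turns this into a nonzero element of $\mathrm{Hom}_{D(H_0\cap gH_0g^{-1})}(M,hgM)$ for a suitable $h\in H_0$. Conjugating by $(hg)^{-1}$, and noting that $hg\notin H$, this contradicts the hypothesis. The same argument for $g\in H$ shows $\mathrm{End}_{D(H_0)}(M)=K$. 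I expect this bookkeeping — inverse versus direct conjugation, $H_0$ versus $H$ intersections, and the left/right conventions of $gM$ — to be the fiddliest part.

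\textbf{Step 3: $\mathcal{N}$ contains $M$.} Let $\mathcal{N}\subseteq\M$ be a nonzero closed $D_b(G)$-submodule. Then $\mathcal{N}$ is pro-coadmissible, by Proposition \ref{procoadproperties}.(i). Choose $g$ such that the projection of $\mathcal{N}$ to the factor $gM$ is nonzero; translating by $g^{-1}$, we may assume the projection $\pi_e:\mathcal{N}\to M$ is nonzero. The image of $\pi_e$ is closed by strictness (Proposition \ref{procoadproperties}.(iii)), so it equals $M$ by topological simplicity.

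I then want a $D(H_0)$-linear splitting $M\to\mathcal{N}$ of $\pi_e$ whose image is exactly the factor $M\subseteq\M$. This is the main obstacle: $\mathcal{N}$ sits in an infinite product, so Lemma \ref{ssdirectsum} does not apply directly. I would handle it as follows.
\begin{itemize}
\item[(a)] The kernel $K_e:=\ker(\pi_e)\cap\mathcal{N}$ is a closed submodule. Since $M$ is a closed submodule of $\M$ and $\M/M$ is the product of the other factors, $\mathcal{N}/K_e\cong M$.
\item[(b)] Consider the composite $\mathcal{N}\to \M\to \prod_{g\neq e} (\text{factor}_g)$. Restricted to any $D(H_0)$-submodule of $\mathcal{N}$ mapping onto $M$, its projection to each factor vanishes, because by Step 2 each factor has no constituent that is a quotient of $M$.
\end{itemize}
To make (b) rigorous, I would pass to each Noetherian Banach level $D_r(H_0)$ and use Corollary \ref{coadcheck} to commute $D_r\h{\otimes}$ with the product. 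Alternatively, I would argue with the finitely many factors through which a coadmissible subquotient surjecting onto $M$ factors. Either way, one obtains a copy of $M$ in $\mathcal{N}$, namely $\mathcal{N}\cap(\text{factor}_e)=M$.

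\textbf{Step 4: conclusion.} Since $M\subseteq\mathcal{N}$ and $\mathcal{N}$ is $G$-stable, $gM\subseteq\mathcal{N}$ for every $g$. Because $\mathcal{N}$ is closed, it contains the closure of $\bigoplus_g gM$, which is all of $\prod gM$. Hence $\mathcal{N}=\M$, so $\M$ is topologically simple.

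Topological irreducibility of $\cind_H^GV$ then follows from the duality between ind-admissible representations and pro-coadmissible modules. Under this duality, closed subrepresentations correspond to closed quotient modules, via reflexivity and Hahn–Banach for spaces of countable type. Ind-admissibility is Proposition \ref{Mackeydecomp}.
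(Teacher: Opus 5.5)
\textbf{Verdict: there is a genuine gap in Step 3.} Your Steps 1, 2 and 4 are fine and agree with the paper. The gap is exactly where you flag the main obstacle.

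\textbf{Claim (b) is false as written.} The module $\mathcal{N}$ is itself a $D(H_0)$-submodule of $\mathcal{N}$ mapping onto $M$. Since it is $G$-stable, its projection to every factor $gM$ is nonzero. What Step 2 actually gives is weaker: a closed $D(H_0)$-submodule of $\M$ that is \emph{isomorphic to} $M$ projects to zero in every factor $S_g$ with $g\notin H$, and hence coincides with the $e$-factor. So the whole argument depends on first producing such a copy of $M$ inside $\mathcal{N}$, i.e.\ a $D(H_0)$-linear splitting of $\pi_e|_{\mathcal{N}}$. Your sentence ``either way, one obtains a copy of $M$ in $\mathcal{N}$'' just asserts this.

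\textbf{The Banach-level route does not supply the splitting.} Corollary \ref{coadcheck} lets you commute $D_r(H_0)\h{\otimes}_{D(H_0)}-$ with the product. However, the Fr\'echet-level hypotheses say nothing about $D_r(H_0)\h{\otimes}_{D(H_0)} S_g$ at a fixed level $r$ for all $g$ at once, neither its semisimplicity nor whether $M_r$ occurs in it. The reason is that the presentations attached to $gM$ are conjugated by $g$ and are not aligned, uniformly in $g$, with the one fixed for $D(H_0)$. This is why Theorem \ref{BanachMackey} needs its own level-wise hypotheses.

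\textbf{The missing idea} is to split only after truncating to finitely many factors, where topological semisimplicity is available.
\begin{enumerate}[(1)]
\item Order the double cosets as $g_1=e,g_2,\dots$ and let $N_m$ be the image of $\mathcal{N}$ in the coadmissible module $\oplus_{i\le m}S_i$.
\item By strictness (Proposition \ref{procoadproperties}.(iii)), $N_m$ is closed. Hence it is topologically semisimple by Lemma \ref{closedss}, so the surjection $N_m\to S_1=M$ splits.
\item By your Step 2 (equivalently, $\mathrm{Hom}_{D(H_0)}(\oplus_{i\le m}S_i,M)=K$), the image of any splitting must be $S_1\oplus 0\oplus\dots\oplus 0$. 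Hence $S_1\subseteq N_m$ for all $m$, compatibly with the transition maps.
\item Because $\mathcal{N}$ is closed in $\prod_i S_i$, we have $\mathcal{N}=\varprojlim_m N_m\supseteq S_1$. Your Step 4 then finishes the proof.
\end{enumerate}

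A minor point: that a closed submodule of a pro-coadmissible module is again pro-coadmissible does not follow from the abelianness in Proposition \ref{procoadproperties}.(i) alone. The paper uses \cite[Corollary A.2]{Orlik} for this.
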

	\begin{proof}
		Label double coset representatives for $H\setminus G/H$ as $g_1=e, g_2, \hdots$
		
		Note that
		\begin{align*}
			(\cind_H^GV)'&\cong (\oplus_{g\in H\setminus G/H} \cind_{H\cap gHg^{-1}}^H \mathrm{Res}_{H\cap gHg^{-1}}^{gHg^{-1}}gV)'\\
			&\cong \prod_i D(H)\otimes_{D(H\cap g_iHg_i^{-1})}g_iM\\
			&\cong \varprojlim_m \oplus_{i=1}^m D(H)\otimes_{D(H\cap g_iHg_i^{-1})}g_iM
		\end{align*}
		as complete bornological $D_b(H)$-modules, and in particular as $D(H_0)$-modules. 
		
		Set $S_i=D(H)\otimes_{D(H\cap g_iHg_i^{-1})}g_iM$, so that $(\cind_H^GV)'\cong \varprojlim_m \oplus_{i=1}^m S_i$. By Proposition \ref{topssfactors}, each $S_i$ is a topologically semisimple $D(H_0)$-module. Note that $S_1=M$.
		
		Now let $N\subseteq (\cind_H^GV)'$ be a non-zero closed $D_b(G)$-submodule, yielding by adjunction a non-trivial morphism $N\to M$. Let $N_m$ be the image of $N$ under the projection map
		\begin{equation*}
			(\cind_H^GV)'\to \oplus_{i=1}^mS_i.
		\end{equation*} 
		 
		As $(\cind_H^GV)'$ is pro-coadmissible by Proposition \ref{Mackeydecomp}, so is $N$ by \cite[Corollary A.2]{Orlik}. As a finite direct sum of coadmissible modules, $\oplus_{i=1}^m S_i$ is coadmissible. In particular, $N_m$ is closed in $\oplus_{i=1}^m S_i$ by Proposition \ref{procoadproperties}, and $N\cong \varprojlim_m N_m$.
		
		Since $N_m$ is a closed $D(H_0)$-submodule of $\oplus_{i=1}^m S_i$, it is in particular topologically semisimple. By construction, the projection map to $S_1=M$ yields a non-trivial morphism $N_m\to M$, which thus allows for a section $M\to N_m$.
		
		But by our assumption,
		\begin{equation*}
			\mathrm{Hom}_{D(H_0)}(\oplus_{i=1}^m S_i, M)=K,
		\end{equation*} 
		so that $M$ appears as a summand only once (namely, as the summand $S_1$). Therefore, if $N_m$ contains a closed submodule which is isomorphic to $M$, it must contain $S_1$.
		
		Taking the limit, it follows that $N$ contains $S_1=g_1M\subseteq (\cind_H^GV)'$, which topologically generates the module, and hence $N=(\cind_H^GV)'$, as required.
	\end{proof}
	
	We remark that due to the duality between admissible representations and coadmissible modules, the condition on Hom spaces can also be checked for the corresponding representations, i.e. one may consider
	\begin{equation*}
		\mathrm{Hom}_{H_0\cap gH_0g^{-1}}(V, gV)
	\end{equation*}
	instead of $\mathrm{Hom}(gM, M)$. As a consequence of our semisimplicity results, one may also switch the order and consider $\mathrm{Hom}(gV, V)$ resp. $\mathrm{Hom}(M, gM)$. Note however that the condition of uniform simplicity is expressed most naturally in terms of modules rather than representations.
	
	We point out that the assumption
	\begin{equation*}
		\mathrm{Hom}_{D(H_0)}(M, M)=K
	\end{equation*}
	implies in particular that $Z_G$ acts via a character $Z_G\to K^\times$ on $M$ (and thus likewise on $V$). This allows us to strengthen Theorem \ref{FSMackey} in the following way, replacing $H_0\cap gH_0g^{-1}$ by $H\cap gHg^{-1}$ in the intertwining condition.
	
	\begin{thm}[Mackey irreducibility criterion, strengthened Fr\'echet version]\label{FSMackeycentral}
		Let $V$ be an admissible locally $L$-analytic $H$-representation such that $M=V'$ is a uniformly simple $D(H_0)$-module (relative to some Fr\'echet--Stein presentation). Assume that
		\begin{equation*}
			\mathrm{Hom}_{D_b(H\cap gHg^{-1})}(gM, M)=\begin{cases}
				K \text{ if $g\in H$}\\
				0 \text{ if $g\in G\setminus H$}.
			\end{cases}
		\end{equation*}
		Then $\mathrm{Hom}_{D_b(H)}(D_b(G), M)$ is a topologically simple $D_b(G)$-module, with the same central character as $M$.
	\end{thm}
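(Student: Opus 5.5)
The plan is to run the argument of Theorem \ref{FSMackey} again, after reducing the intertwining condition there to the one assumed here by means of the central character. First I will record that $Z_G$ acts on $M$ through a character $\chi$. Since $Z_G$ is central in $G$, every $z\in Z_G$ acts on $M$ by a $D(H_0)$-linear, and hence $D_b(H)$-linear, automorphism. The hypothesis for $g=e$ gives $\mathrm{End}_{D_b(H)}(M)=K$, so $z$ acts by a scalar $\chi(z)\in K^\times$. Local analyticity of $\chi$ follows from the orbit maps. Centrality also shows that $Z_G$ acts on $gM$ through the same $\chi$, and in the same way on every factor $g M'$ of $\mathrm{Hom}_{D_b(H)}(D_b(G),M)\cong\prod_{g\in G/H} gM'$ (with $M'=M$). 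So the whole induced module has central character $\chi$, which proves the last assertion once simplicity is established.

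Next I will compare intertwiners over $H_0\cap gH_0g^{-1}$ with intertwiners over $H\cap gHg^{-1}$. The latter group contains $(H_0\cap gH_0g^{-1})Z_G$ with finite index, and $Z_G$ acts by the same character $\chi$ on both $gM$ and $M$. It follows that
\begin{equation*}
\mathrm{Hom}_{D(H_0\cap gH_0g^{-1})}(gM,M)=\mathrm{Hom}_{D_b((H_0\cap gH_0g^{-1})Z_G)}(gM,M).
\end{equation*}
The proof of Theorem \ref{FSMackey} does not really need the full condition on $\mathrm{Hom}_{D(H_0\cap gH_0 g^{-1})}$. It only needs that $M$ occurs exactly once as a constituent of $\prod_i S_i$ in a suitable sense. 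I will therefore redo that argument with $D(H_0)$ replaced by the complete bornological algebra $D_b(H)=\oplus_{z} D(H_0)z$, i.e. work with $D_b(H)$-modules on which $Z_G$ acts by $\chi$. For such modules, $D_b(H)$-submodules coincide with $D(H_0)$-submodules, because $Z_G$ acts by scalars. Hence the topological semisimplicity of each $S_i=D(H)\otimes_{D(H\cap g_iHg_i^{-1})}g_iM$ from Proposition \ref{topssfactors} carries over verbatim to $D_b(H)$.

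The key step is to compute, via Frobenius reciprocity,
\begin{equation*}
\mathrm{Hom}_{D_b(H)}(S_i,M)\cong \mathrm{Hom}_{D_b(H\cap g_iHg_i^{-1})}(g_iM,M),
\end{equation*}
which by hypothesis is $K$ for $i=1$ and $0$ otherwise. Then, as in the proof of Theorem \ref{FSMackey}, let $N$ be a nonzero closed $D_b(G)$-submodule with images $N_m$ in $\oplus_{i\le m}S_i$. Each $N_m$ is closed and $D_b(H)$-stable, and it is topologically semisimple with a nonzero map to $S_1=M$, so $M$ splits off $N_m$. Because $\mathrm{Hom}_{D_b(H)}(\oplus_{i\le m}S_i,M)=K$, the copy of $M$ must be $S_1$, and therefore $S_1\subseteq N_m$ for all $m$. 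Passing to the limit gives $S_1\subseteq N$, and since $S_1$ generates topologically over $D_b(G)$, we get $N$ equal to everything.

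The main obstacle is the Frobenius reciprocity and multiplicity step over $D_b(H)$ rather than $D(H_0)$. The issue is that $H\cap gHg^{-1}$ may be strictly larger than $(H_0\cap gH_0g^{-1})Z_G$, and $S_i$ is induced from that larger group. I would handle this by writing $S_i$ as a quotient of $D(H_0)\otimes_{D(H_0\cap gH_0g^{-1})}gM$, as in the proof of Proposition \ref{topssfactors}, and checking that, for modules with central character $\chi$, the adjunction $\mathrm{Hom}_{D_b(H)}(D_b(H)\otimes_{D_b(H\cap gHg^{-1})}gM,M)=\mathrm{Hom}_{D_b(H\cap gHg^{-1})}(gM,M)$ holds algebraically, since all the induction indices are finite.
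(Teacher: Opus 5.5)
Your proposal is correct and takes essentially the same route as the paper. You extract the central character $\chi$ from $\mathrm{End}_{D_b(H)}(M)=K$. You note that on modules where $Z_G$ acts through $\chi$, the $D_b(H)$- and $D(H_0)$-submodules coincide, so Proposition \ref{topssfactors} makes each $S_i$ a finite direct sum of topologically simple $D_b(H)$-modules. You then rerun the argument of Theorem \ref{FSMackey} using $\mathrm{Hom}_{D_b(H)}(S_i,M)\cong\mathrm{Hom}_{D_b(H\cap g_iHg_i^{-1})}(g_iM,M)$. Your detour through $(H_0\cap gH_0g^{-1})Z_G$ and the quotient presentation of $S_i$ is unnecessary: the tensor--hom adjunction is purely algebraic, and finiteness of $H/(H\cap g_iHg_i^{-1})$ is only needed to identify the algebraic induction with $S_i$.
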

	
	We remark that in comparison to Theorem \ref{FSMackey}, the condition on intertwining is weaker, as $H\cap gHg^{-1}$ bigger than $H_0\cap gH_0g^{-1}$ (and usually also bigger than $(H_0\cap gH_0g^{-1})Z_G$!), so it may be easier to establish the non-existence of certain morphisms.
	
	\begin{proof}
		Let the action of $Z_G$ on $M$ be given by $\chi: Z_G\to K^{\times}$.
		
		The proof is now exactly the same argument as before, with the additional observation that each of our summands $S_i=D(H)\otimes_{D(H\cap g_iHg_i^{-1})}g_iM$ is a topologically semisimple $D(H_0)$-module on which $Z_G$ acts via $\chi$, and thus a finite direct sum of topologically simple $D_b(H)$-modules.
	\end{proof}
	
	In some situations, it is easier to understand the corresponding modules on the Noetherian Banach level. We therefore formulate a slightly stronger condition which, despite its rather cumbersome notation, has the advantage of being checkable on each Noetherian level separately. Moreover, on each level, we only need to verify the condition for finitely many double coset representatives.
	
	\begin{thm}[{Mackey irreducibility criterion, Noetherian Banach version}]
		\label{BanachMackey}
		Let $H_0\leq G$ be a compact open subgroup and let $H=H_0Z_G$.
		
		Let $V$ be an admissible locally analytic $H$-representation and set $M=V'$. List the double coset representatives $g_1=e, g_2, \hdots$ of $H\backslash G/H$ and let $U\leq H_0$ be an $L$-uniform open normal subgroup. Let $n_1< n_2< \hdots$ be a sequence of non-negative integers such that
		\begin{equation*}
			U^{p^{n_m}}\subseteq H_0\cap g_m^{-1}H_0g_m
		\end{equation*}
		and let $D_{r_m}(H_0)$ denote the corresponding Banach completion of the distribution algebra, with $r_m=p^{-1/p^{n_m}}$.
		
		We write accordingly $M_m:=D_{r_m}(H_0)\otimes_{D(H_0)} M$.
		
		For all $m\geq i\geq 1$, let $n'_{i, m}\geq n_m$ be such that 
		\begin{equation*}
			g_iU^{p^{n'_{i, m}}}g_i^{-1}\subseteq U^{p^{n_m}},
		\end{equation*}
		so that $D_{s_{i, m}}(H_0\cap g_i^{-1}H_0g_i)$ makes sense for $s_{i, m}=p^{-1/p^{n'_{i, m}}}$, and there is a natural map
		
		\begin{equation*}
			g_iD_{s_{i, m}}(H_0\cap g_i^{-1}H_0g_i)g_i^{-1}\to D_{r_m}(H_0).
		\end{equation*}

		Set $M_{i, m}=D_{s_{i, m}}(H_0)\otimes_{D(H_0)} M$.
		
		Assume that the following conditions are satisfied for each $m\geq 1$:
		\begin{enumerate}[(i)]
			\item $M_m$ is a simple $D_{r_m}(H_0)$-module.
			\item For $i=1, \hdots, m$, we have
			\begin{equation*}
				\mathrm{Hom}_{g_iD_{s_{i, m}}(H_0\cap g_i^{-1}H_0g_i)g_i^{-1}}(g_iM_{i, m}, M_m)=\begin{cases}
					K \text{ if $i=1$}\\
					0 \text{ otherwise.}
				\end{cases}
			\end{equation*} 
		\end{enumerate}
		Then $\mathrm{Hom}_{D_b(H)}(D_b(G), M)$ is a topologically simple $D_b(G)$-module. In particular, its dual $\cind_H^GV$ is a topologically irreducible $G$-representation.
	\end{thm}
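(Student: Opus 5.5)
We follow the proof of Theorem \ref{FSMackey}, but carry out the key semisimplicity and multiplicity arguments on each Noetherian Banach level $D_{r_m}(H_0)$, where every submodule of a finitely generated module is closed. Condition (i) makes $M$ uniformly simple, hence topologically simple by Lemma \ref{uniftop} and \cite[Lemma 3.9]{ST}. As before, write $(\cind_H^GV)'\cong\varprojlim_m\oplus_{i=1}^m S_i$ with $S_i=D(H)\otimes_{D(H\cap g_iHg_i^{-1})}g_iM$ and $S_1=M$. Let $N\subseteq(\cind_H^GV)'$ be a non-zero closed $D_b(G)$-submodule. By pro-coadmissibility (Proposition \ref{procoadproperties} and \cite[Corollary A.2]{Orlik}), $N\cong\varprojlim N_m$, where $N_m$ is the closed image of $N$ in $\oplus_{i\le m}S_i$. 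The goal is to show $S_1\subseteq N$. Since $S_1$ topologically generates the whole module over $D_b(G)$, this gives $N=(\cind_H^GV)'$.

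To reach this goal we pass to the Banach level. Since $N\neq 0$ is a $D_b(G)$-submodule, translating by a suitable $g\in G$ gives a non-zero projection to $S_1$. Hence for $m$ large, $D_{r_m}(H_0)\otimes N_m\to M_m$ is non-zero, and it is surjective because $M_m$ is simple. Next we decompose $D_{r_m}(H_0)\otimes_{D(H_0)}S_i$. Choose coset representatives $h$ for $H_0/(H_0\cap g_iH_0g_i^{-1})$; modulo $Z_G$, the module $S_i$ is a quotient of $\oplus_h hg_iM$. The factor $g_iM$ is a module over $g_iD(H_0\cap g_i^{-1}H_0g_i)g_i^{-1}$. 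By the choice of $n'_{i,m}$, base change to $D_{r_m}(H_0)$ factors through $g_iM_{i,m}$, so $D_{r_m}(H_0)\otimes S_i$ is a quotient of $D_{r_m}(H_0)\otimes_{g_iD_{s_{i,m}}(\cdots)g_i^{-1}}g_iM_{i,m}$. Frobenius reciprocity then gives
\[
\mathrm{Hom}_{D_{r_m}(H_0)}\bigl(D_{r_m}(H_0)\otimes S_i, M_m\bigr)\hookrightarrow \mathrm{Hom}_{g_iD_{s_{i,m}}(H_0\cap g_i^{-1}H_0g_i)g_i^{-1}}(g_iM_{i,m}, M_m),
\]
and condition (ii) says this is $0$ for $2\le i\le m$ and $K$ for $i=1$.

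Now consider the surjection $D_{r_m}(H_0)\otimes N_m\to M_m$. Since $N_m\subseteq \oplus_{i\le m}S_i$ is closed and coadmissible, exactness of base change \cite[Remark 3.2]{ST} lets us view $D_{r_m}(H_0)\otimes N_m$ as a submodule of $\oplus_{i\le m}D_{r_m}(H_0)\otimes S_i$. We then need this surjection to extend to the ambient sum, or equivalently we need the ambient sum to be semisimple. I would obtain semisimplicity from Proposition \ref{ssres} applied to the simple module $M_{i,m}$, conjugated and induced, mirroring the proof of Proposition \ref{topssfactors}. Semisimplicity gives a complement, so the surjection extends to a map $\oplus_i D_{r_m}(H_0)\otimes S_i\to M_m$. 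By the Hom computation this extension kills every summand with $i\ge2$ and is a scalar on the summand $i=1$. Moreover $M_m$ occurs with multiplicity one, only in the summand $D_{r_m}\otimes S_1=M_m$. Hence $D_{r_m}(H_0)\otimes N_m$ contains the summand $M_m$.

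Taking the limit over $m$ gives $N_m\supseteq S_1$ for all large $m$, since $N_m\cap S_1$ is coadmissible and its base changes are all of $M_m$. Therefore $N\supseteq S_1$, which finishes the proof. The main obstacle is the Banach-level semisimplicity of $D_{r_m}(H_0)\otimes S_i$ for $i\ge2$. The algebra $D_{s_{i,m}}$ may differ from the level on which simplicity of $M$ is assumed, so Proposition \ref{ssres} does not apply directly. If that fails, I would avoid semisimplicity altogether. Instead I would argue with Hom-vanishing and the socle: the image of $D_{r_m}\otimes N_m$ in the quotient by the $S_1$-summand admits no non-zero map to $M_m$, so by simplicity of $M_m$ the kernel of the projection of $D_{r_m}\otimes N_m$ onto that quotient must surject onto $M_m$.
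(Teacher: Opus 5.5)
There is a genuine gap, and it is the one you flag yourself. Your level-$r_m$ argument needs $\oplus_{i\le m} D_{r_m}(H_0)\otimes_{D(H_0)}S_i$ to be semisimple, but nothing in the hypotheses gives this. The module $D_{r_m}(H_0)\otimes S_i$ is built from $D_{r_m}(H_0\cap g_iH_0g_i^{-1})\otimes g_iM$. Untwisted by $g_i$, this is a base change of $M$ along the completion attached to $g_i^{-1}U^{p^{n_m}}g_i$, which is not the presentation on which (i) gives simplicity. So Proposition \ref{ssres} does not apply. Nor does the Fr\'echet-level topological semisimplicity of $S_i$ (Proposition \ref{topssfactors}) control its individual Banach base changes.

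Your fallback does not close the gap either. Set $Q=\oplus_{i\ge 2}D_{r_m}(H_0)\otimes S_i$ and $P=D_{r_m}(H_0)\otimes N_m$. Vanishing of $\mathrm{Hom}(Q,M_m)$ passes to quotients of $Q$, not to the submodule $\pi_Q(P)\subseteq Q$. If $P\cap M_m=0$, what you would actually have to rule out is $M_m$ occurring as a subquotient of $Q$. Without semisimplicity, that is strictly stronger than condition (ii).

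The remedy is not to rerun the irreducibility argument on the Banach level. Your Frobenius reciprocity computation is essentially the key step of the paper's proof. By adjunction,
\begin{equation*}
\mathrm{Hom}_{D(H_0\cap g_iH_0g_i^{-1})}(g_iM, M_m)\cong \mathrm{Hom}_{g_iD_{s_{i,m}}(H_0\cap g_i^{-1}H_0g_i)g_i^{-1}}(g_iM_{i,m}, M_m).
\end{equation*}
Since $M\cong\varprojlim M_m$, taking the limit over $m\ge i$ shows that $\mathrm{Hom}_{D(H_0\cap g_iH_0g_i^{-1})}(g_iM, M)$ is $K$ for $i=1$ and $0$ for $i\ge 2$, by (ii). Together with uniform simplicity of $M$, which is exactly (i), these are the hypotheses of Theorem \ref{FSMackey}. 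That theorem uses semisimplicity only on the Fr\'echet level, via Proposition \ref{topssfactors}, where it does not depend on the choice of Banach presentation. This is how the paper proves the statement: it reduces to Theorem \ref{FSMackey} rather than redoing the argument.
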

	\begin{proof}
		We verify the conditions from Theorem \ref{FSMackey}. By definition, $M=\varprojlim M_m$ is uniformly simple over $D(H_0)$ relative to $D_{r_m}(H_0)$.
		
		For a fixed $i$, we have
		\begin{align*}
			\mathrm{Hom}_{D(H_0\cap g_iH_0g_i^{-1})}(g_iM, M)&\cong \varprojlim_m \mathrm{Hom}_{D(H_0\cap g_iH_0g_i^{-1})}(g_iM, M_m)\\
			&\cong \varprojlim_m \mathrm{Hom}_{g_iD(H_0\cap g_i^{-1}H_0g_i)g_i^{-1}}(g_iM, M_m).
		\end{align*}
		
		Write $H'_i=H_0\cap g_i^{-1}H_0g_i$.
		
		By tensor-hom adjunction, we have
		\begin{align*}
			\mathrm{Hom}_{g_iD(H'_i)g_i^{-1}}(g_iM, M_m)&\cong \mathrm{Hom}_{g_iD_{s_{i, m}}(H'_i)g_i^{-1}}(g_iD_{s_{i, m}}(H'_i)g_i^{-1}{\otimes}_{g_iD(H'_i)g_i^{-1}}g_iM, M_m)\\
			&\cong \mathrm{Hom}_{g_iD_{s_{i, m}}(H'_i)g_i^{-1}}(g_i(D_{s_{i, m}}(H'_i){\otimes}_{D(H'_i)}M), M_m)\\
			&\cong \mathrm{Hom}_{g_iD_{s_{i, m}}(H'_i)g_i^{-1}}(g_i(D_{s_{i, m}}(H_0){\otimes}_{D(H_0)}M), M_m)\\
			&\cong \mathrm{Hom}_{g_iD_{s_{i, m}}(H'_i)g_i^{-1}}(g_iM_{i, m}, M_m).
		\end{align*}
		
		More concretely, the first isomorphism is tensor-hom adjunction together with the fact that $M_m$ is a $D_{r_m}(H_0)$-module, and hence a $g_iD_{s_{i,m}}(H'_i)g_i^{-1}$-module by construction. The second isomorphism is the straightforward observation that twisting by $g_i$ respects tensor products via conjugation, while the third one follows from our description of Banach completions of the distribution algebra at the beginning of section 3. The fourth isomorphism is immediate by the definition of $M_{i, m}$.
		
		For $i=1$, we obtain $K$, while for $i>1$, we obtain $0$ by assumption, and taking the limit as $m$ goes to infinity, we can apply Theorem \ref{FSMackey} to conclude.
	\end{proof}

	We reiterate that the representation $\cind_H^GV$ is in general not admissible. Rather, establishing admissibility is equivalent to verifying the condition in Corollary \ref{coadcheck}.

\section{Examples}

We conclude this paper by giving two examples. Firstly, we consider representations of the Heisenberg group over $\mathbb{Q}_p$ induced by a character of the Heisenberg group over $\mathbb{Z}_p$ (and extended to the full centre). The Mackey criterion ensures that we obtain topologically irreducible representations, and a calculation exploiting the non-compactness of the centre ensures that the representations are even admissible. 

Secondly, we study representations of the Borel subgroup of $\mathrm{SL}_2(\mathbb{Q}_p)$ induced from a uniform `congruence subgroup' $H_0$. As before, we obtain topologically irreducible representations, but this time, any representation obtained in this way is far from admissible: in fact, it contains a certain $H_0$-representation with infinite multiplicity. 

It would be very interesting to establish general criteria when a compact induction is both topologically irreducible and admissible. We present the calculations below both as a first indication what kind of features might be involved, and to showcase the vastly differing outcomes afforded by this theory. 

\subsection{The $p$-adic Heisenberg group}

We consider the following setup:
\begin{equation*}
	G=\begin{pmatrix}
		1&\mathbb{Q}_p&\mathbb{Q}_p\\
		0&1&\mathbb{Q}_p\\
		0&0&1
	\end{pmatrix}, \ \ H_0=\begin{pmatrix}
	1&\mathbb{Z}_p&\mathbb{Z}_p\\
	0 & 1&\mathbb{Z}_p\\
	0&0&1
	\end{pmatrix}.
\end{equation*}

Note that
\begin{equation*}
	Z_G=\begin{pmatrix}
		1&0&\mathbb{Q}_p\\
		0&1&0\\
		0&0&1
	\end{pmatrix}\cong (\mathbb{Q}_p, +),
\end{equation*}
and $G/Z_G\cong (\mathbb{Q}_p^2, +)$. We remark that $H=H_0Z_G$ is normal in $G$.

Given a locally analytic character $\chi=(\chi_1, \chi_2): (\mathbb{Z}_p^2, +)\to K^\times$, we obtain a locally analytic character $K_\chi$ of $H_0$ by letting $H_0\cap Z_G$ act trivially and writing $H_0/H_0\cap Z_G\cong \mathbb{Z}_p^2$. 

Note that $Z_G/H_0\cap Z_G\cong \mathbb{Q}_p/\mathbb{Z}_p$, so we can extend the above to a locally analytic character of $H=H_0Z_G$ by choosing a character $\mu: \mathbb{Q}_p/\mathbb{Z}_p\to K$. We assume now that $K$ contains all $p^n$th roots of unity and choose $\mu$ to be injective (i.e. $\mu(p^{-1})$, $\mu(p^{-2}), \hdots$ form a compatible system of $p$th, $p^2$th, $\hdots$ primitive roots of unity). Regarding $\mu$ as a locally analytic character on the additive group $\mathbb{Q}_p\cong Z_G$, this ensures that $\mathrm{ker}\mu=H_0\cap Z_G$. We write $\widetilde{\chi}=(\chi, \mu)$ for the corresponding character on $H$. 

We denote the resulting $1$-dimensional $H$-representation by $K_{\chi, \mu}$ and consider the induced representation
\begin{equation*}
	V_{\chi, \mu}:=\cind_H^G K_{\chi, \mu}.
\end{equation*}

\begin{prop}
	The locally analytic $G$-representation $V_{\chi, \mu}$ is topologically irreducible, with central character given by $\mu$.
\end{prop}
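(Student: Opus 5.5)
Apply Theorem \ref{FSMackeycentral} with $V=K_{\chi,\mu}$ and $M=V'$, a one-dimensional $D(H_0)$-module. Uniform simplicity is immediate: every $M_n=D_n(H_0)\otimes_{D(H_0)}M$ is a one-dimensional space on which $D_n(H_0)$ acts through a character, hence simple (the character extends continuously to each Banach completion because $M$ is coadmissible and finite-dimensional). For $g\in H$, $\mathrm{Hom}_{D_b(H)}(M,M)=K$ is trivial. So the main work is to show
\begin{equation*}
\mathrm{Hom}_{D_b(H\cap gHg^{-1})}(gM,M)=0 \quad\text{for } g\in G\setminus H .
\end{equation*}
Since $H$ is normal in $G$, this amounts to $\mathrm{Hom}_{D_b(H)}(gM,M)=0$, i.e.\ the conjugate character $\widetilde\chi^g:h\mapsto\widetilde\chi(g^{-1}hg)$ differs from $\widetilde\chi$ on $H$. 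The central character of the induction is then $\mu$ by the final clause of Theorem \ref{FSMackeycentral}.

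The key step is a commutator computation. Write $g$ with upper off-diagonal entries $(a,b,c)$ and $h\in H_0$ with entries $(x,y,z)$. Then $g^{-1}hg=h\cdot[h^{-1},g^{-1}]$, and the commutator is central, equal to the matrix with top-right entry $\pm(ay-bx)$ (sign fixed by a one-line computation). Hence
\begin{equation*}
\widetilde\chi^g(h)=\widetilde\chi(h)\,\mu(\pm(ay-bx)).
\end{equation*}
Now $g\notin H$ means $(a,b)\notin\mathbb{Z}_p^2$, since $H$ consists of the matrices with $a,b\in\mathbb{Z}_p$ and $c$ arbitrary. Say $a\notin\mathbb{Z}_p$. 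Take $h$ with $x=0$, $y=1$. Then the extra factor is $\mu(\pm a)$, which is $\neq1$ because $\ker\mu=\mathbb{Z}_p$ by the injectivity of $\mu$ on $\mathbb{Q}_p/\mathbb{Z}_p$. The case $b\notin\mathbb{Z}_p$ is symmetric, taking $x=1$, $y=0$. So the two characters differ and the Hom space vanishes.

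The main obstacle is bookkeeping rather than a conceptual difficulty. One must confirm that the Hom condition of Theorem \ref{FSMackeycentral} over $D_b(H\cap gHg^{-1})=D_b(H)$ for one-dimensional modules reduces exactly to comparing characters of $H$, using that $D(H_0)$ has dense image of the group algebra. One must also track the inversion in the twisting convention $gM$, which only affects the sign inside $\mu$ and so is harmless. With both conditions verified, Theorem \ref{FSMackeycentral} gives topological simplicity of the dual module, and hence topological irreducibility of $V_{\chi,\mu}$ with central character $\mu$.
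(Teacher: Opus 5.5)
Your proposal is correct and follows essentially the same route as the paper: you apply Theorem~\ref{FSMackeycentral}, use normality of $H$ to reduce the intertwining condition to showing that $h\mapsto\widetilde{\chi}(g^{-1}hg)$ differs from $\widetilde{\chi}$ on $H$, and choose $h$ with $(2,3)$-entry $1$ (resp.\ $(1,2)$-entry $1$) so that the central commutator contributes $\mu(\pm a)\neq 1$ (resp.\ $\mu(\pm b)\neq 1$); your commutator formula just packages the paper's explicit matrix computation. One harmless slip: $D_n(H_0)\otimes_{D(H_0)}M$ need not be non-zero for every $n$, since the character need not extend to every Banach completion (compare the paper's admissibility computation), but it is one-dimensional for all sufficiently large $n$, which is all that uniform simplicity requires.
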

\begin{proof}
	We verify the condition in Theorem \ref{FSMackeycentral}. As $H$ is normal, it suffices to show the following: for each $g\in G\setminus H$, there exists an $h\in H$ such that $\widetilde{\chi}(h)\neq \widetilde{\chi}(ghg^{-1})$.
		
	Let
	\begin{equation*}
		g=\begin{pmatrix}
			1&a&b\\
			0&1&c\\
			0&0&1
		\end{pmatrix}\in G\setminus H.
	\end{equation*}	
	Thus by assumption, either $a\notin \mathbb{Z}_p$ or $b\notin \mathbb{Z}_p$. Suppose that $a\notin \mathbb{Z}_p$. Then set
	\begin{equation*}
		h=\begin{pmatrix}
			1&0&1\\
			0&1&1\\
			0&0&1
		\end{pmatrix}
	\end{equation*}
	and note that
	\begin{equation*}
		ghg^{-1}=\begin{pmatrix}
			1&0&1+a\\
			0&1&1\\
			0&0&1
		\end{pmatrix}.
	\end{equation*}
	We then have $\widetilde{\chi}(h)=\chi_2(1)\cdot \mu(1)=\chi_2(1)$ and $\widetilde{\chi}(ghg^{-1})=\chi_2(1)\cdot \mu(1+a)\neq \chi_2(1)$, since $\mathrm{ker}\mu=\mathbb{Z}_p$.
	
	Similarly, if $b\notin \mathbb{Z}_p$, take 
	\begin{equation*}
		h=\begin{pmatrix}
			1&1&1\\
			0&1&0\\
			0&0&1
		\end{pmatrix}
	\end{equation*}
	for an analogous conclusion.
	
	Hence $\mathrm{Hom}_{D(H)}(gK_{\chi, \mu}', K_{\chi, \mu}')=0$ if $g\notin H$, and $\mathrm{Hom}_{D(H)}(K_{\chi, \mu}', K_{\chi, \mu}')=K$, as $K_{\chi, \mu}$ is $1$-dimensional.
	
	Thus the Mackey condition from Theorem \ref{FSMackeycentral} is satisfied, and $\cind_H^G K_{\chi, \mu}$ is topologically irreducible.
\end{proof}

This example also highlights once more the difference between Theorem \ref{FSMackey} and \ref{FSMackeycentral}: while $H$ is normal, $H_0$ is not, and we would not have been able to perform the same argument as above when restricting to $H_0\cap gH_0g^{-1}$, which only contains matrices with `central component' contained in the kernel of $\mu$. 

In order to establish admissibility, it is convenient to introduce further notation. Let $\epsilon=1$ for $p>2$ and $\epsilon=2$ if $p=2$, and let
\begin{equation*}
	U=\left\{\begin{pmatrix}
		1&p^\epsilon\mathbb{Z}_p&p^\epsilon\mathbb{Z}_p\\
		0&1&p^\epsilon\mathbb{Z}_p\\
		0&0&1
	\end{pmatrix}\right\}\subseteq H_0,
\end{equation*}
an open normal uniform pro-$p$ subgroup of $H_0$, with topological basis 
\begin{equation*}
	h_1=\begin{pmatrix}
		1&p^\epsilon&0\\
		0&1&0\\
		0&0&1
	\end{pmatrix}, \ h_2=\begin{pmatrix}
	1&0&0\\
	0&1&p^\epsilon\\
	0&0&1
	\end{pmatrix}, \ z=\begin{pmatrix}
	1&0&p^\epsilon\\
	0&1&0\\
	0&0&1
	\end{pmatrix}.
\end{equation*}

Since $H$ is normal, $gK_{\chi, \mu}$ is a character for $U$, where any $h\in U$ acts as $\widetilde{\chi}(g^{-1}hg)$. 

Now let $r_n=p^{-1/p^n}$ and consider $D(U)\cong \varprojlim D_{r_n}(U)$ as before. Recall that $D_{r_n}(U)$ is endowed with a Banach norm $|-|_{r_n}$ such that $|h_i-1|_{r_n}=r_n$ for $i=1, 2$, $|z-1|_{r_n}=r_n$. By construction, we then have
\begin{equation*}
	D_{r_n}(U)\otimes_{D(U)} gK_{\chi, \mu}=0
\end{equation*}
if $|1-\widetilde{\chi}(g^{-1}h_ig)|>r_n$ for $i=1$ or $2$. Indeed, in this case
\begin{equation*}
	\left|\frac{h_i-1}{\widetilde{\chi}(g^{-1}h_ig)-1}\right|_{r_n}<1,
\end{equation*}
so that $1-\frac{h_i-1}{\widetilde{\chi}(g^{-1}h_ig)-1}$ is a unit in the Banach algebra $D_{r_n}(U)$, but at the same time it acts as zero on $gK_{\chi, \mu}$, forcing
\begin{equation*}
	D_{r_n}(U)\otimes_{D(U)}gK_{\chi, \mu}=0,
\end{equation*}
as claimed. This observation will help us to invoke Corollary \ref{coadcheck} when investigating admissibility.

\begin{prop}
	The locally analytic $G$-representation $V_{\chi, \mu}$ is admissible. 
\end{prop}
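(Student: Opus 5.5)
Admissibility means that $(V_{\chi,\mu})'$ is a coadmissible $D(H_0)$-module; equivalently, a coadmissible $D(U)$-module, since $D(H_0)$ is finite free over $D(U)$. By Proposition \ref{Mackeydecomp} and the subsequent dual description, since $H$ is normal and $K_{\chi,\mu}$ is one-dimensional, we have
\begin{equation*}
(V_{\chi,\mu})'\cong \prod_{g\in G/H} gK_{\chi,\mu}'
\end{equation*}
as $D(U)$-modules. Each factor is a one-dimensional coadmissible $D(U)$-module, on which $h\in U$ acts by $\widetilde{\chi}(g^{-1}hg)$, up to the harmless inversion convention. The set $G/H\cong \mathbb{Q}_p^2/\mathbb{Z}_p^2$ is countable, so Corollary \ref{coadcheck} applies with $A=D(U)$. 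It therefore suffices to show that for each fixed $n$, $D_{r_n}(U)\otimes_{D(U)} gK_{\chi,\mu}'=0$ for all but finitely many cosets $gH$. By the unit argument just before the statement, this holds as soon as $|1-\widetilde{\chi}(g^{-1}h_ig)|>r_n$ for $i=1$ or $i=2$.

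The next step is an explicit computation of $g^{-1}h_ig$. Take $g$ with entries $a,b,c$ as in the previous proof. Conjugating $h_1$ (entry $p^\epsilon$ in position $(1,2)$) gives $h_1$ times a central element with entry $\pm p^\epsilon c$. Conjugating $h_2$ gives $h_2$ times a central element with entry $\pm p^\epsilon a$. Hence
\begin{equation*}
\widetilde{\chi}(g^{-1}h_1g)=\chi_1(p^\epsilon)\,\mu(\pm p^\epsilon c),\qquad \widetilde{\chi}(g^{-1}h_2g)=\chi_2(p^\epsilon)\,\mu(\pm p^\epsilon a).
\end{equation*}
The coset $gH$ is determined by $(a,c)\bmod \mathbb{Z}_p^2$, as $b$ is absorbed into $Z_G\subseteq H$. (I will recheck the labelling against the previous proof, where $b$ seemed to play the role of the $(2,3)$-entry, but only the two off-centre entries matter.) Fix $n$. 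There are only finitely many cosets with $a,c\in p^{-N}\mathbb{Z}_p$ for a suitable $N=N(n)$. So the claim reduces to the following: if $v_p(c)$ or $v_p(a)$ is very negative, then $\mu(p^\epsilon c)$ resp. $\mu(p^\epsilon a)$ is a primitive $p^k$-th root of unity $\zeta$ with $k$ large, and the corresponding quantity $|1-\chi_i(p^\epsilon)\zeta|$ exceeds $r_n$.

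The main obstacle is this last estimate, because the factor $\chi_i(p^\epsilon)$ spoils the naive bound. Here $|1-\zeta|=p^{-1/(p^{k-1}(p-1))}$ tends to $1$ as $k\to\infty$, and in particular exceeds $r_n=p^{-1/p^n}$ once $k$ is large enough. For the product I will use that $\chi_i$ is locally analytic, so $\chi_i(p^\epsilon)=\exp(\lambda_i p^\epsilon)$ for $\epsilon$ chosen appropriately (possibly after enlarging $\epsilon$, i.e. shrinking $U$, which is harmless). Thus $|\chi_i(p^\epsilon)-1|\le \rho$ for some fixed $\rho<1$. If $|1-\zeta|>\max(\rho, r_n)$, the ultrametric inequality gives
\begin{equation*}
|1-\chi_i(p^\epsilon)\zeta|=|(1-\zeta)+\zeta(1-\chi_i(p^\epsilon))|=|1-\zeta|>r_n .
\end{equation*}
This holds for all $k\ge k_0(n)$, i.e. whenever $v_p(a)$ or $v_p(c)$ is below a bound depending only on $n$, and only finitely many cosets remain. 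If shrinking $U$ is needed, I will note that coadmissibility over $D(U')$ for an open $U'\le U$ is equivalent, so Corollary \ref{coadcheck} may be applied with the presentation for $U'$ instead.
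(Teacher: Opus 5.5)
Your proposal is correct and is essentially the paper's own argument. You reduce via Corollary \ref{coadcheck} over $D(U)$, using that $H$ is normal so the Mackey factors are the one-dimensional modules $gK_{\chi,\mu}'$. You then kill all but finitely many factors by the unit argument, using the conjugation formulas $\widetilde{\chi}(g^{-1}h_1g)=\chi_1(p^\epsilon)\mu(p^\epsilon c)$ and $\widetilde{\chi}(g^{-1}h_2g)=\chi_2(p^\epsilon)\mu(-p^\epsilon a)$ together with the strict ultrametric inequality.

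The only superfluous step is the possible shrinking of $U$. Any continuous character of $\mathbb{Z}_p$ satisfies $|\chi_i(1)-1|<1$, so $|\chi_i(p^\epsilon)-1|<1$ already holds for the given $U$.
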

\begin{proof}
	By Corollary \ref{coadcheck}, it suffices to show that for each $n$, there exist only finitely many double cosets $HgH=gH$ such that
	\begin{equation*}
		D_{r_n}(U)\otimes_{D(U)} gK_{\chi, \mu}\neq 0.
	\end{equation*}
	
	Coset representatives are given by 
	\begin{equation*}
		g_{a, c}:=\begin{pmatrix}
			1& a& 0\\
			0&1&c\\
			0&0&1
		\end{pmatrix}
	\end{equation*}
	for $a, c$ coset representatives of $\mathbb{Q}_p/\mathbb{Z}_p$. Fix $n$ such that
	\begin{equation*}
		|\chi(p^\epsilon, 0)-1|, |\chi(0, p^\epsilon)-1|<p^{-1/p^n(p-1)},
	\end{equation*}
	
	and note that $h_1$ acts on $g_{a, c}K_{\chi, \mu}$ as
	\begin{equation*}
		\widetilde{\chi}(g_{a, c}^{-1}h_1g_{a, c})=\widetilde{\chi}(\begin{pmatrix}
			1&p^\epsilon&p^\epsilon c\\
			0&1&0\\
			0&0&1
		\end{pmatrix})=\chi(p^\epsilon, 0)\cdot \mu(p^\epsilon c),
	\end{equation*} 
	while $h_2$ acts as
	\begin{equation*}
		\widetilde{\chi}(g_{a, c}^{-1}h_2g_{a, c})=\widetilde{\chi}(\begin{pmatrix}
			1&0&-p^\epsilon a\\
			0&1&p^\epsilon\\
			0&0&1
		\end{pmatrix})=\chi(0, p^\epsilon)\cdot \mu(-p^\epsilon a).
	\end{equation*}
	
	Since $\mathrm{ker}\mu=\mathbb{Z}_p$, if $x\in \mathbb{Q}_p$ with $|x|=p^{-i}$, then $\mu(x)$ is a primitive $p^i$th root of unity, and
	\begin{equation*}
		|\mu(x)-1|=p^{-1/p^{i-1}(p-1)}
	\end{equation*}
	by \cite[Example 2.1.6]{Kedlaya}.
	
	Therefore, if $c\in \mathbb{Q}_p$ such that $|c|>p^{-n}$, then
	\begin{equation*}
		|\mu(p^\epsilon c)-1|>p^{-\frac{1}{p^{n+\epsilon-1}(p-1)}}>p^{-1/p^n},
	\end{equation*}
	and hence
	\begin{equation*}
		|\chi(p^\epsilon,0)\cdot \mu(p^\epsilon c)-1|=|\chi(p^\epsilon,0)(\mu(p^\epsilon c)-1)+(\chi(p^\epsilon,0)-1)|=|\mu(p^\epsilon c)-1|>r_n.
	\end{equation*}
	
	By the same argument, if $|a|>p^{-n}$, then
	\begin{equation*}
		|\chi(0, p^\epsilon)\cdot \mu(-p^\epsilon a)-1|>r_n.
	\end{equation*}
	
	This implies that for each $n$, we have
	\begin{equation*}
		D_{r_n}(U)\otimes_{D(U)}gK_{\chi, \mu}=0
	\end{equation*}
	for all but finitely many double cosets $HgH$. Therefore, we can apply Corollary \ref{coadcheck} to deduce that $V_{\chi, \mu}$ is admissible.
\end{proof}

\subsection{The Borel subgroup of $\mathrm{SL}_2(\mathbb{Q}_p)$}

We now turn to a second example. For simplicity, let $p>2$ and let
\begin{equation*}
	G=\left\{\begin{pmatrix}
		a&b\\
		0& a^{-1}
	\end{pmatrix}| a\in \mathbb{Q}_p^\times, b\in \mathbb{Q}_p\right\}
\end{equation*}	
be the standard Borel subgroup of $\mathrm{SL}_2(\mathbb{Q}_p)$ consisting of upper-triangular matrices, and
\begin{equation*}
	H_0=\left\{\begin{pmatrix}
		a&b\\
		0&a^{-1}
	\end{pmatrix}\in G| a\in 1+p\mathbb{Z}_p, \ b\in p\mathbb{Z}_p\right\}
\end{equation*}
the matrices in $G$ which are congruent to the identiy matrix $I_2$ modulo $p$. Note that as $p>2$, $H$ is an open uniform pro-$p$-subgroup, with a topological basis given by
\begin{equation*}
	h_1=\begin{pmatrix}
		\gamma&0\\
	0&\gamma^{-1}
	\end{pmatrix}, \ h_2=\begin{pmatrix}
	1&p\\
	0&1
	\end{pmatrix}
\end{equation*}
for $\gamma=\mathrm{exp}(p)$.

The centre of $G$ consists of $\pm I_2$.

We first define a finite-dimensional irreducible $H=H_0Z_G$-representation as follows: Let $\sigma: (p\mathbb{Z}_p, +)\to K^\times$ be a non-trivial smooth character with $\mathrm{ker}\sigma=p^{j+1}\mathbb{Z}_p$, $j\geq 1$, and let $\chi: (\mathbb{Z}_p, +)\to K^\times$ be any locally analytic character.

Since $h_1^{p^{j-1}}h_2h_1^{-p^{j-1}}h_2^{-1}\in \langle h_2^{p^j}\rangle=\left(\begin{smallmatrix}
	1&p^{j+1}\mathbb{Z}_p\\0&1
\end{smallmatrix}\right)$, we obtain a $1$-dimensional representation $W_{\chi, \sigma}$ of $H_{j-1}:=\langle h_1^{p^{j-1}}, h_2\rangle$, given by
\begin{equation*}
	h_1^{p^{j-1}}\mapsto \chi(1), \ h_2\mapsto \sigma(p).
\end{equation*}

We set $V_{\chi, \sigma}=\mathrm{Ind}_{H_{j-1}}^{H_0} W_{\chi, \sigma}$. Explicitly, $V_{\chi, \sigma}$ is a $p^{j-1}$-dimensional $K$-vector space (with basis vectors $e_0, \hdots, e_{p^{j-1}-1}$ in natural bijection with $h_1^i$, $i=0, \hdots, p^{j-1}-1$), such that the action of $h_1$ is given by the matrix
\begin{equation*}
	\begin{pmatrix}
		0&0&\hdots&0&\chi(1)\\
		1&0&\ddots&0&0\\
		0&1&\ddots&\ddots&\vdots\\
		\vdots&\ddots&\ddots&\ddots&0\\
		0&0&\hdots&1&0
	\end{pmatrix}
\end{equation*}
and the action of $h_2$ is given by a diagonal matrix with entries $\sigma(p)$, $\sigma(\gamma^2 p)$, $\sigma(\gamma^4 p)$, $\hdots$, $\sigma(\gamma^{2(p^{j-1}-1)} p)$. This is an irreducible representation: Since $\mathrm{ker}(\sigma)=p^{j+1}\mathbb{Z}_p$, $h_2$ acts diagonally with distinct eigenvalues (note that $|1-\gamma^{2i}|=|ip|$, so for $i=1, \hdots, p^{j-1}-1$, we have $1-\gamma^2i\notin p^j\mathbb{Z}_p$, and thus $\gamma^{2i}-\gamma^{2i'}\notin p^j\mathbb{Z}_p$ for $i\neq i'\leq p^{j-1}-1$), so any non-zero subrepresentation contains at least one of the basis vectors, and hence all of them using the $h_1$-action.

As $Z_G=\pm I_2$, we can regard $V_{\chi, \sigma}$ as an irreducible, finite-dimensional (in particular, admissible), locally analytic $H=H_0Z_G$-representation by letting $-I_2$ act trivially. It is easy to check that the arguments below can also be adapted to the choice that $-I_2$ acts as $-1$.

We now consider the $G$-representation
\begin{equation*}
	X_{\chi, \sigma}:=\cind_H^G V_{\chi, \sigma}.
\end{equation*} 

\begin{prop}
	The locally analytic $G$-representation $X_{\chi, \sigma}$ is topologically irreducible.
\end{prop}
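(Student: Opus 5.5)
We verify the hypotheses of Theorem \ref{FSMackeycentral}, or equivalently of Theorem \ref{FSMackey} after a reduction. Since $V_{\chi,\sigma}$ is finite-dimensional, $M=V_{\chi,\sigma}'$ is a finite-dimensional $D(H_0)$-module. Hence $D_r(H_0)\otimes_{D(H_0)}M\cong M$ for all sufficiently large $r$, because any finite-dimensional locally analytic representation extends to $D_r$ for $r$ close to $1$. Irreducibility of $V_{\chi,\sigma}$, shown above, then gives that $M_r$ is simple for all large $r$. So $M$ is uniformly simple. Also $\mathrm{End}_{D(H_0)}(M)=K$: an $H_0$-endomorphism commutes with $h_2$, which has distinct eigenvalues, so it is diagonal in the basis $e_i$, and commuting with the cyclic $h_1$-action forces it to be scalar. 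The remaining task is to show
\begin{equation*}
\mathrm{Hom}_{H\cap gHg^{-1}}(V_{\chi,\sigma}, gV_{\chi,\sigma})=0 \quad \text{for } g\in G\setminus H,
\end{equation*}
which by the remark after Theorem \ref{FSMackey} may be checked on representations.

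For the double cosets, write $g=\left(\begin{smallmatrix} a&b\\0&a^{-1}\end{smallmatrix}\right)$. Using $H$ on both sides, and noting that the diagonal torus modulo $(1+p\mathbb{Z}_p)\cdot\{\pm1\}$ is represented by $p^k\mu_{p-1}$, we reduce to $g=t\,u$ with $t=\mathrm{diag}(a,a^{-1})$, $a\in p^{\mathbb{Z}}\mu_{p-1}$, and $u$ unipotent with entry $b\in \mathbb{Q}_p$ modulo suitable lattices. The key tool is the action of the unipotent subgroup $N_0=\left(\begin{smallmatrix}1&p\mathbb{Z}_p\\0&1\end{smallmatrix}\right)$. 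On $V_{\chi,\sigma}$ it acts diagonally, through the $H_1$-orbit of the character $\sigma$, that is, via the characters $x\mapsto\sigma(\gamma^{2i}x)$, all of conductor exactly $p^{j+1}$. On $gV$, the group $N_0\cap gN_0g^{-1}$ acts through the characters $x\mapsto\sigma(a^{-2}\gamma^{2i}x)$. Since $h_2$ is diagonalisable with distinct eigenvalues, an intertwiner forces some character of the first family to coincide with one of the second on $N_0\cap gN_0g^{-1}=\left(\begin{smallmatrix}1&p\,\min(1,|a^2|)\cdot\mathbb{Z}_p\\0&1\end{smallmatrix}\right)$. If $|a|\neq1$, either this group is all of $N_0$ and the conductors differ, namely $p^{j+1}$ versus $p^{j+1}|a|^{2}$, or it is smaller and we compare the restrictions. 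In either case we obtain different conductors or a trivial versus non-trivial restriction, and the Hom space vanishes. If instead $a\in\mu_{p-1}$, we get $a^{-2}\gamma^{2i}\equiv\gamma^{2i'}$ mod $p^j$. Since $\gamma^{2i}\in 1+p\mathbb{Z}_p$, this forces $a^2=1$, i.e. $a=\pm1$ up to $H$.

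This leaves $g=\left(\begin{smallmatrix}1&b\\0&1\end{smallmatrix}\right)$ with $b\notin p\mathbb{Z}_p$. Here we use the torus: conjugating $h_1^{p^{k}}$ by $g$ produces the element $h_1^{p^k}\left(\begin{smallmatrix}1&b(\gamma^{-2p^k}-1)\\0&1\end{smallmatrix}\right)$, up to the order of factors, which lies in $H\cap gHg^{-1}$ once $|b p^{k+1}|\leq p^{-1}$. In $gV$ it therefore acts as $h_1^{p^k}$ twisted by an $h_2$-power of order about $b p^{k+1}$. Choosing $k$ so that this power lies in $N_0$ but not in $\ker\sigma$ relative to the eigenvalues, we compare the eigenvalues of the $h_1^{p^{k}}$-action, or of its combination with $N_0$, on both sides. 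This should show that no nonzero intertwiner exists: the twisting multiplies each $N_0$-eigenline by a non-trivial root of unity $\sigma(\cdot)$, which destroys the compatibility with the cyclic permutation structure of $h_1$.

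I expect this unipotent case to be the main obstacle. One must choose $k$ relative to $|b|$ and $j$ carefully, with separate handling for $|b|$ large (where the condition on $N_0\cap gN_0g^{-1}$ is automatic but the torus intersection is small) and for $p^{-1}<|b|$ small. Once all $g\notin H$ are dealt with, Theorem \ref{FSMackeycentral} yields topological irreducibility.
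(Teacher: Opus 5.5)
\textbf{Gap: the unipotent double cosets are not actually handled.} Your check of uniform simplicity and of $\mathrm{End}(M)=K$ is correct and follows the paper's case analysis. So is your treatment via $N_0$-characters of the double cosets with $|a|\neq 1$, or $|a|=1$ and $a^2\not\equiv 1 \bmod p$. Aiming at Theorem \ref{FSMackeycentral} rather than Theorem \ref{FSMackey} is immaterial, since you only use elements of $H_0\cap gH_0g^{-1}$.

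The gap is the remaining case $g=\left(\begin{smallmatrix}1&b\\0&1\end{smallmatrix}\right)$ with $|b|=p^m\geq 1$, which you leave at ``this should show''. Your heuristic does not settle it. For $k<j-1$ the element $h_1^{p^k}$ permutes the $N_0$-eigenlines in cycles of length $p^{j-1-k}$. Twisting such a weighted cyclic shift by a diagonal matrix of roots of unity can perfectly well give a conjugate operator; whether it does is decided by the product of the twisting factors around each cycle. So you must compute that product, $\sigma\left(\gamma^{2i}c\,\frac{\gamma^{2p^{j-1}}-1}{\gamma^{2p^k}-1}\right)$ with $c=b(\gamma^{-2p^k}-1)$, and check that it is $\neq 1$; its argument has absolute value $|b|p^{-j}>p^{-j-1}$. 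This is what the paper does. It picks $h=\mathrm{diag}(x,x^{-1})\in H_0$ with $ghg^{-1}=hh_2$. It then shows that $h^{p^{j-1}}$ and $(hh_2)^{p^{j-1}}$ act by scalars differing by a non-trivial value of $\sigma$, so $h$ and $hh_2$ share no eigenvalue.

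\textbf{A quicker way to close your argument.} Take $k=\max(m,j-1)$.
\begin{enumerate}[(i)]
\item Since $h_1^{p^{j-1}}$ acts on $V_{\chi,\sigma}$ by $\chi(1)$, $h_1^{p^k}$ acts by a scalar $\lambda$.
\item Set $n=\left(\begin{smallmatrix}1&c\\0&1\end{smallmatrix}\right)$. The element $z=gh_1^{p^k}g^{-1}=h_1^{p^k}n$ lies in $H_0\cap gH_0g^{-1}$, because $|c|=p^{m-k-1}\leq p^{-1}$.
\item $z$ acts on $gV_{\chi,\sigma}$ by $\lambda$ and on $V_{\chi,\sigma}$ by $\lambda n$.
\item Also $|c|\geq p^{-j}>p^{-j-1}$, i.e. $c\notin\ker\sigma$. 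So every eigenvalue $\sigma(\gamma^{2i}c)$ of $n$ differs from $1$, and $n-1$ is invertible.
\item Any intertwiner $\phi\colon V_{\chi,\sigma}\to gV_{\chi,\sigma}$ satisfies $\phi\circ(n-1)=0$, hence $\phi=0$.
\end{enumerate}
This handles all $|b|\geq 1$ at once, so the separate treatment of large and small $|b|$ you anticipate is unnecessary.
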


\begin{proof}
	We verify the condition from Theorem \ref{FSMackey}. By the same argument as above $M=V'_{\chi, \sigma}$ is uniformly simple. 
	
	Let
	\begin{equation*}
		g=\begin{pmatrix}
			ap^i& b\\
			0&a^{-1}p^{-i} 
		\end{pmatrix}\in G\setminus H
	\end{equation*} 
	with $a\in \mathbb{Z}_p^\times$, $i\in \mathbb{Z}$, and $b\in \mathbb{Q}_p$.
	
	If $i>0$, let $c=\mathrm{max}\{1, p^{j-2i}\}$. Then $gh_2^{c}g^{-1}=h_2^{ca^2p^{2i}}\in H_0\cap gH_0g^{-1}$ acts trivially on $V_{\chi, \sigma}$, whereas $h_2^c$ does not.
	
	Similarly, if $i<0$, let $c=p^{j-2i-1}$. Then $gh_2^{c}g^{-1}=h_2^{a^2p^{j-1}}\in H_0\cap gH_0g^{-1}$ acts non-trivially on $V_{\chi, \sigma}$, whereas $h_2^c$ is in the kernel.
	
	It thus remains to consider the case $i=0$. Note that then $gh_2g^{-1}=h_2^{a^2}\in H_0\cap gH_0g^{-1}$. The eigenvalues of $h_2$ on $V_{\chi, \sigma}$ where given by $\sigma(\gamma^{2i} p)$ for $i=0, 1, \hdots, p^{j-1}-1$. If $a^2\notin 1+p\mathbb{Z}_p$, then this set of eigenvalues is disjoint from $\sigma(a^2\gamma^{2i'} p)$ for $i'=0, 1, \hdots, p^{j-1}-1$, as $a^2\mathrm{exp}(2i'p)$ and $\mathrm{exp}(2i p)$ already differ modulo $p$.
	
	We have now reduced to the case $i=0$ and (without loss of generality) $a\in \{1, -1\}$. Using multiplication with a central element, we can suppose that $a=1$.
	
	If $|b|\geq 1$, then $|\frac{p}{p+ b}|\leq |p|$, so that there exists $x\in 1+p\mathbb{Z}_p$ with $x^2=1-\frac{p}{p+b}=\frac{b}{p+b}$. In particular, $px^2=b-bx^2$, so that $b(x^{-1}-x)=px$. Set $h=\left(\begin{smallmatrix}
		x&0\\0&x^{-1}
	\end{smallmatrix}\right)$. Then
	\begin{equation*}
		ghg^{-1}=\begin{pmatrix}
		x & b(x^{-1}-x)\\
		0&x^{-1}
		\end{pmatrix}=h\cdot h_2\in H_0\cap gH_0g^{-1}.
	\end{equation*} 
	
	Let $\alpha\in \mathbb{Z}_p$ such that $x=\mathrm{exp}(\alpha p)$, so that $h=h_1^\alpha$. After extending $K$, we can assume that $K$ contains all $p^{j-1}$th roots of $\chi(1)$, so that $h_1$, and hence $h$, acts semisimply on $V_{\chi, \sigma}$. Each eigenvalue of $h$ then has the property that its $p^{j-1}$th power equals $\chi(1)^\alpha$.

	We can give a similar description of the action of $hh_2$. In fact, if $k\in \{0, \hdots, p^{j-1}-1\}$ such that $\alpha-k\in p^{j-1}$, let $l=p^{1-j}(\alpha-k)\in \mathbb{Z}_p$. Then the action of $hh_2$ is given by sending the basis vector $e_i$ to
	\begin{equation*}
		hh_2\cdot e_i=\begin{cases}
			e_{i+k}\chi(1)^l \sigma(\gamma^{2i}p)\ \text{ if $i+k<p^{j-1}$}\\
			e_{i+k-p^{j-1}}\chi(1)^{l+1}\sigma(\gamma^{2i}p) \ \text{if $i+k\geq p^{j-1}$}.
		\end{cases}
	\end{equation*}

	Note $(hh_2)^{p^{j-1}}$ acts on $V_{\chi, \sigma}$ as a scalar multiple of the identity, with each eigenvalue being
	\begin{align*}
		\chi(1)^\alpha \cdot \prod_{i=0}^{p^{j-1}-1} \sigma(\gamma^{2i}p)&=\chi(1)^\alpha \cdot \sigma(\sum_i \gamma^{2i}p)\\
		&=\chi(1)^\alpha\cdot \sigma(p\cdot \frac{\gamma^{2p^{j-1}}-1}{\gamma^2-1}).
	\end{align*}
	
	After extending $K$ to contain suitable $p^{j-1}$th roots, we can thus assume that $hh_2$ acts semisimply on $V_{\chi, \sigma}$, and each eigenvalue is a $p^{j-1}$th root of the expression above.
	
	We claim that none of the eigenvalues of $h$ are eigenvalues of $hh_2$. Indeed, as $|\gamma^{2p^{j-1}}-1|=|p|^j$, one has $\sigma(p\cdot \frac{\gamma^{2p^{j-1}}-1}{\gamma^2-1})\neq 1$. Therefore, there can be no $H_0\cap gH_0g^{-1}$-linear map between $gV_{\chi, \sigma}$ and $V_{\chi, \sigma}$ in this case.
	
	We have thus shown that $\mathrm{Hom}_{H_0\cap gH_0g^{-1}}(V_{\chi, \sigma}, gV_{\chi, \sigma})=0$ is zero, unless $g\in H$. Note that for $g\in H$, the endomorphism space $\mathrm{Hom}_{H_0}(V_{\chi, \sigma}, V_{\chi, \sigma})$ is indeed $K$, as $h_2$ acts diagonally with $p^{j-1}$ distinct eigenvalues (so any endomorphism needs to be given by a diagonal matrix) and the only diagonal matrices commuting with the $h_1$-action are scalar matrices.
	
	We have thus verified that the conditions of Theorem \ref{FSMackey} are satisfied, and $X_{\chi, \sigma}$ is a topologically irreducible $G$-representation.
\end{proof}

\begin{prop}
	The representation $X_{\chi, \sigma}$ is not admissible.
\end{prop}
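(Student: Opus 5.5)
The plan is to apply the admissibility test of Corollary \ref{coadcheck} to the dual product decomposition
\begin{equation*}
	X_{\chi, \sigma}'\cong \prod_{g\in H\setminus G/H} D(H)\otimes_{D(H\cap gHg^{-1})}gM, \qquad M=V_{\chi, \sigma}'.
\end{equation*}
This decomposition holds as $D(H_0)$-modules. Note that $Z_G=\pm I_2$ is finite, so $H$ is compact and every factor is coadmissible. I will show that infinitely many factors are isomorphic to one fixed non-zero finite-dimensional $D(H_0)$-module $W'$. Then, for a fixed sufficiently large $n$, infinitely many factors $D_{r_n}(H_0)\otimes_{D(H_0)} S_g$ are non-zero, so Corollary \ref{coadcheck} says $X_{\chi,\sigma}'$ is not coadmissible. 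This exhibits concretely the ``infinite multiplicity'' phenomenon announced at the start of the section.

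The first step is to choose the double cosets
\begin{equation*}
	g_i=\begin{pmatrix} p^{-i}&0\\0&p^{i}\end{pmatrix}, \qquad i\geq 1.
\end{equation*}
Conjugation by $g_i$ does not change the diagonal entry and multiplies the upper right entry by $p^{-2i}$. Hence $g_iH_0g_i^{-1}$ consists of matrices with $a\in 1+p\mathbb{Z}_p$ and $b\in p^{1-2i}\mathbb{Z}_p$, so it contains $H_0$. It follows that $H\cap g_iHg_i^{-1}=H$, and the Mackey summand is just $g_iM$ viewed as a $D(H)$-module. The $g_i$ give pairwise distinct double cosets, because the absolute value of the diagonal entry $a$ is a well-defined invariant on $H\setminus G/H$: every element of $H$ has $|a|=1$.

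Next I identify $g_iV$ as an $H_0$-representation. An element $k\in H_0$ acts on $g_iV$ through $g_i^{-1}kg_i$. This element has the same diagonal part as $k$, and its upper right entry is multiplied by $p^{2i}$. In particular $h_1$ acts exactly as on $V_{\chi,\sigma}$, via the cyclic matrix above. Meanwhile $h_2$ acts through $\left(\begin{smallmatrix}1&p^{1+2i}\\0&1\end{smallmatrix}\right)$, whose upper right entry lies in $p^{j+1}\mathbb{Z}_p=\ker\sigma$ once $2i\geq j$. Such an element acts trivially, since the eigenvalues of $h_2^{p^{2i}}$ are of the form $\sigma(\gamma^{2k}p^{1+2i})=1$. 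The main point to check carefully is that this really determines the whole $H_0$-action independently of $i$. It does, because $H_0$ is topologically generated by $h_1,h_2$ and the actions are continuous. The resulting representation $W$ is the $p^{j-1}$-dimensional representation on which $h_1$ acts cyclically and $h_2$ trivially. The element $-I_2$ acts trivially in every case.

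So $S_{g_i}\cong W'$ for all $i\geq j/2$. It remains to see that $D_{r_n}(H_0)\otimes_{D(H_0)}W'\neq 0$ for some fixed $n$. Write $W'\cong\varprojlim_n W'_n$. Since $W'\neq 0$, some $W'_n\neq 0$. The transition maps $W'_{n+1}\to W'_n$ have dense image, so $W'_{n'}\neq 0$ for every $n'\geq n$; fix such an $n$. Then for this $n$ infinitely many factors $D_{r_n}(H_0)\otimes_{D(H_0)}S_{g_i}$ are non-zero, and Corollary \ref{coadcheck} shows that $X_{\chi,\sigma}'$ is not coadmissible, i.e.\ $X_{\chi,\sigma}$ is not admissible.
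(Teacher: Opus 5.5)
Your proof is correct and follows essentially the paper's route: you apply Corollary \ref{coadcheck} to the Mackey summands $g_iM$ for infinitely many double cosets with $H_0\subseteq g_iH_0g_i^{-1}$, on which $h_2$ eventually acts trivially, so these summands are all isomorphic and non-zero. The only difference is cosmetic: you take the diagonal representatives $g_i=\mathrm{diag}(p^{-i},p^{i})$ instead of the paper's $\left(\begin{smallmatrix}p^{-i}&p^{-i}\\0&p^{i}\end{smallmatrix}\right)$, which makes $h_1$ act on $g_iV_{\chi,\sigma}$ exactly as on $V_{\chi,\sigma}$ and gives the threshold $2i\geq j$ directly.
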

\begin{proof}
	We will once again use Corollary \ref{coadcheck}. In fact, using the Mackey decomposition of $X_{\chi, \sigma}$ into $H_0$-representations, we will exhibit infinitely many pairwise isomorphic constituents. 
	
	Recall that $j$ is chosen such that $\mathrm{ker}(\sigma)=p^{j+1}\mathbb{Z}_p$, so that $h_2^{p^j}$ acts trivially on $V_{\chi, \sigma}$.
	
	For each $i\geq j$, let
	\begin{equation*}
		g_i:=\begin{pmatrix}
			p^{-i}& p^{-i}\\
			0& p^i
		\end{pmatrix}\in G.
	\end{equation*}
	Note that $g_i, g_{i'}$ are in different $H$-double cosets for $i\neq  i'$, as the respective first diagonal entries have different absolute values.
	
	Then
	\begin{equation*}
		h_1=g_i\begin{pmatrix}
			\gamma & \gamma-\gamma^{-1}\\
			0& \gamma^{-1}
		\end{pmatrix}g_i^{-1}\in H_0\cap g_iH_0g_i^{-1}
	\end{equation*}
	and
	\begin{equation*}
		h_2=g_i h_2^{p^i}g_i^{-1}\in H_0\cap g_iH_0g_i^{-1},
	\end{equation*}
	so that $H_0\cap g_iH_0g_i^{-1}=H_0$.
	
	Writing $M:=(V_{\chi, \sigma})'$, we thus obtain $D(H_0)\otimes_{D(H_0\cap g_iH_0g_i^{-1})}g_iM=g_iM$. Note that in this example, $H\cap g_iHg_i^{-1}=(H_0\cap g_iH_0g_i^{-1})Z_G$ (if $h\in H$ with $g_ihg_i^{-1}\in H_0$, then the first entry of $h$ needs to be $1$ mod $p$, and as $p>2$, this forces $h\in H_0$), so we can conclude that
	\begin{equation*}
		D(H)\otimes_{D(H\cap g_iHg_i^{-1})}g_iM=g_iM.
	\end{equation*}
	
	Since $i\geq j$, $h_2$ acts trivially on $g_iM$ (because $h_2^{p^i}$ acts trivially on $M$), and $h_1$ acts on $g_iM$ in the same way as $\left(\begin{smallmatrix}
		\gamma& \gamma-\gamma^{-1}\\0&\gamma^{-1}
	\end{smallmatrix}\right)$ acts on $M$. This is independent of $i$, so for all $i\geq j$, the $D(H_0)$-modules $g_iM$ are pairwise isomorphic.
	
	This might seem somewhat counterintuitive if we compare this to the irreducibility criterion above, but let us emphasize that this yields an isomorphism of $g_iV_{\chi, \sigma}$ and $g_{i'}V_{\chi, \sigma}$ as representations of $H_0=H_0\cap g_iH_0g_i^{-1}\cap g_{i'}H_0g_{i'}^{-1}$, whereas the earlier proposition can be phrased as saying that there is no morphism between these for $i\neq i'$ as $g_iH_0g_i^{-1}\cap g_{i'}H_0g_{i'}^{-1}$-representations.
	
	Choosing $n$ large enough such that
	\begin{equation*}
		D_{r_n}(H_0)\otimes_{D(H_0)} g_jM\neq 0,
	\end{equation*}
	the result follows from Corollary \ref{coadcheck}.
\end{proof}

\end{document}